\documentclass{amsart}
\usepackage{graphicx}
\usepackage{amsmath}
\usepackage{amscd}
\usepackage{mathtools}
\usepackage{enumerate}
\usepackage{tikz-cd}
\usepackage[all,cmtip]{xy}
\usepackage{tikz}
\usepackage{amssymb}
\usepackage{ascmac} 
\usepackage{color}

\newtheorem{dfn}{Definition}[section]
\newtheorem{thm}[dfn]{Theorem}

\newtheorem{lem}[dfn]{Lemma}

\theoremstyle{definition}
\newtheorem{exa}[dfn]{Example}
\newtheorem{que}[dfn]{Question}

\title[Linear automorphisms giving Galois points]{Linear automorphisms of smooth hypersurfaces giving Galois points}
\author{Taro Hayashi}
\address{
	(Taro Hayashi)
	Faculty of Agriculture,
	Kindai University,
	Nakamaticho 3327-204, Nara, Nara 631-8505, Japan
}
\email{haya4taro@gmail.com}
\date{\today}
\begin{document}
	
\maketitle
\begin{abstract}
Let $X$ be a smooth hypersurface $X$ of degree $d\geq4$ in a projective space $\mathbb P^{n+1}$.
We consider a projection of $X$ from $p\in\mathbb P^{n+1}$ to a plane $H\cong\mathbb P^n$. 
This projection induces an extension of function fields $\mathbb C(X)/\mathbb C(\mathbb P^n)$. 
The point $p$ is called a Galois point if the extension is Galois. 
In this paper, we will give a necessary and sufficient conditions for $X$ to have Galois points by using linear automorphisms.
\end{abstract}
Keywords: Smooth hypersurface; Automorphism; Galois point; Galois extension.
	
MSC2010: Primary 14J70; Secondary 12F10.
\section{Introduction}
In this paper, we work over ${\mathbb C}$. 
For an irreducible variety $Y$, let $\mathbb C(Y)$ be the function field of $Y$. 
Let $X$ be a smooth hypersurface of degree $d\geq 4$ in $\mathbb P^{n+1}$, $p$ be a point in $\mathbb P^{n+1}$, and $\pi_p:X\dashrightarrow H$ be a projection with center $p$ where $H$ is a hyperplane not containing $p$. 
We have an extension of function fields $\pi^{\ast}:\mathbb C(H)\rightarrow \mathbb C(X)$ such that  $[\mathbb C(X):\mathbb C(H)]=d-1$ (resp. $d$) if $p\in X$ (resp. $p\not\in X$).
The structure of this extension does not depend on the choice of $H$ but on the point $p$. 
We write $K_p$ instead of $\mathbb C(H)$.
Since $H\cong\mathbb P^n$, $K_p\cong \mathbb C(\mathbb P^n)$ as a field.

Let $Y$ be an irreducible variety $Y$.
Let $K$ be a non-trivial intermediate field between $\mathbb C(Y)$ and $\mathbb C$ such that $K$ is a purely transcendental extension of $\mathbb C$ with the transcendence degree $n$.
The field $K$ is called a maximal rational subfield if there is not a non-trivial intermediate field $L$ between $\mathbb C(Y)$ and $K$ such that $L$ is a purely transcendental extension of $\mathbb C$ with the transcendence degree $n$.

Let $X$ be a smooth hypersurface of degree $d\geq 4$ in $\mathbb P^{n+1}$.
If $n=1$, then the field $K_p$ is a maximal rational subfield of $\mathbb C(X)$ ([\ref{bio:1000}]).  
In the case where $n=2$ and $d=4$, if $p$ is not an outer Galois point of $X$,
then the field $K_p$ is a maximal rational subfield. 
If $d\geq 5$, then $K_p$ is always a maximal rational subfield. 
Please see [\ref{bio:1001},\ref{bio:200}] for details. 
\begin{dfn}$([\ref{bio:20},\ref{bio:21},\ref{bio:22}])$.
The point $p\in\mathbb P^{n+1}$ is called a Galois point for $X$ if the extension $\mathbb C(X)/K_p$ is Galois. 
Moreover, if $p\in X$ $($resp. $p\not\in X)$, then we call $p$ an inner $($resp. outer$)$ Galois point. 
\end{dfn}
Pay attention that if $n=1$ or $p\not\in X$, then $\pi_p$ is a morphism such that $\pi_p:X\rightarrow \mathbb P^n$ is a Galois cover of a variety.
\begin{thm}$([\ref{bio:20},\ref{bio:21},\ref{bio:22}])$.
Let $X$ is a smooth hypersurface of degree $d\geq 4$ in $\mathbb P^{n+1}$, and $p\in\mathbb P^{n+1}$ be a Galois point of $X$.	
Then the Galois group of $\mathbb C(X)/K_p$ is induced by a linear automorphism of $X$.
In addition, if $p$ is an inner (resp. outer) Galois point, then the Galois group of $\mathbb C(X)/K_p$ is a cyclic group of $d-1$ (resp. $d$) 
\end{thm}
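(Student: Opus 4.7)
The plan is to show that every element of $\mathrm{Gal}(\mathbb{C}(X)/K_p)$ is the restriction of a linear automorphism of $\mathbb{P}^{n+1}$ that fixes $p$ and preserves every line through $p$, and then to observe that the group of all such automorphisms is cyclic of order equal to the degree of $\pi_p$.

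First, I would translate the Galois data into geometry: any $\sigma\in\mathrm{Gal}(\mathbb{C}(X)/K_p)$ is a birational self-map of $X$ satisfying $\pi_p\circ\sigma=\pi_p$, and the order of the Galois group equals $[\mathbb{C}(X):K_p]$, which is $d-1$ when $p\in X$ and $d$ otherwise. It therefore suffices to prove the two statements separately: linearity of the elements, and that the resulting linear group is cyclic of the right order.

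Second, I would upgrade $\sigma$ to a projective linear automorphism of $\mathbb{P}^{n+1}$ preserving $X$. For smooth hypersurfaces of degree $d\geq 4$, classical rigidity results (Chang for plane curves, $n=1$; Matsumura--Monsky style statements for $n\geq 2$) assert that birational self-maps are biregular and, outside a short list of exceptions, come from $\mathrm{PGL}(n+2)$. This is the step I expect to be the main obstacle: one must invoke the cited references to rule out exotic behaviour, and in the borderline cases (notably quartic $K3$ surfaces, which admit many non-linear biregular automorphisms) one has to use the extra constraint that our $\sigma$ preserves every fibre of $\pi_p$ to force linearity.

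Finally, granted $\tilde\sigma\in\mathrm{PGL}(n+2)$ inducing $\sigma$, the relation $\pi_p\circ\sigma=\pi_p$ forces $\tilde\sigma$ to send each line through $p$ to itself as a set; in particular $\tilde\sigma(p)=p$ and $\tilde\sigma$ acts trivially on the pencil of lines through $p$. Choosing coordinates with $p=[0{:}\cdots{:}0{:}1]$ and $H=\{x_{n+1}=0\}$, $\tilde\sigma$ must be represented by a diagonal matrix $\mathrm{diag}(1,\ldots,1,\lambda)$ for some $\lambda\in\mathbb{C}^*$. Hence $\mathrm{Gal}(\mathbb{C}(X)/K_p)$ embeds into $\mathbb{C}^*$, so it is cyclic, and by the degree count at the beginning its order is $d-1$ (inner) or $d$ (outer).
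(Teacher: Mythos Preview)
The paper does not prove this theorem; it is quoted from Yoshihara's work (references [\ref{bio:20},\ref{bio:21},\ref{bio:22}]) and no argument is supplied here. There is therefore no in-paper proof to compare your proposal against.

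Your outline follows the route taken in those references and is essentially sound, with two caveats. First, the linearity step in the cited proofs is not really carried by Matsumura--Monsky (which, as you note, fails for quartic $K3$ surfaces); it is the constraint $\pi_p\circ\sigma=\pi_p$ itself that does the work, since $\sigma$ must preserve each line $\ell$ through $p$ and a general such line meets $X$ in at least three points other than $p$, pinning down $\sigma|_\ell$ as a projectivity of $\ell$ and then globally as a projectivity of $\mathbb{P}^{n+1}$. Second, your last step contains a small gap: a linear map of $\mathbb{P}^{n+1}$ fixing $p=[0{:}\cdots{:}0{:}1]$ and acting trivially on the pencil of lines through $p$ is not forced to be diagonal; in those coordinates it has the block form
\[
\begin{pmatrix} I_{n+1} & 0\\ b^{T} & \lambda\end{pmatrix},\qquad b\in\mathbb{C}^{n+1},\ \lambda\in\mathbb{C}^{\ast},
\]
and one checks directly that $\pi_p$ is unchanged for any $b$. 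The conclusion survives: the group of such matrices is $\mathbb{C}^{n+1}\rtimes\mathbb{C}^{\ast}$, and since $\mathbb{C}^{n+1}$ is torsion-free every finite subgroup injects into $\mathbb{C}^{\ast}$ via $(b,\lambda)\mapsto\lambda$ and is therefore cyclic of the required order. Equivalently, one can conjugate the finite Galois group into diagonal form by an appropriate choice of the auxiliary hyperplane $H$.
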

\begin{dfn}
An automorphism $g$ of $X$ is called linear if there is an automorhism $h$ of $\mathbb P^{n+1}$ such that $h(X)=X$ and $h_{|X}=g$.
\end{dfn}
If $X$ is a smooth hypersurface of degree $d\geq 4$ in $\mathbb P^{n+1}$ and $(n,d)\not=(2,4)$,
then the automorphism group Aut$(X)$ of $X$ is a finite subgroup of the group PGL$(n+2,\mathbb{C})={\rm Aut}(\mathbb P^{n+1})$, for instance, see ([\ref{bio:102}]).
\begin{dfn}
Let $p\in\mathbb P^{n+1}$ is a Galois point of $X$.
An automorphism $g$ of $X$ is called an automorphism belonging to the Galois point $p$ if $g$ generates the Galois group of the Galois extension $\mathbb C(X)/K_p$.
\end{dfn}
\begin{dfn}
Let $g$ be a linear automorphism of $X$.
A matrix $A$ is called a representation matrix of $g$ if $g=A$ in ${\rm PGL}(n+2,\mathbb{C})$.
\end{dfn}
A necessary and sufficient condition for a smooth hypersurface $X\subset\mathbb P^{n+1}$ to have Galois points is given by the defining equation of $X$ $([\ref{bio:20},\ref{bio:21},\ref{bio:22}])$.
For the case $n=1$, there is a sufficient condition for a smooth plane $X$ curve to have Galois points by the structure of the automorphism group ${\rm Aut}(X)$ as follows.
\begin{thm}\label{thm:1}$([\ref{bio:1}])$.	
Let $X$ be a smooth hypersurface of degree $d\geq 4$ in $\mathbb P^{n+1}$, and $g\in{\rm Aut}(X)$ be a linear automorphism of order $k(d-1)$ $(resp.\ kd)$ for $n,k\geq 1$. 
If $n=1$ and $k\geq2$, then $X$ has an inner $(resp.\ outer)$ Galois point $p$, and $g^k$ is an automorphism belonging to the Galois point $p$. 
\end{thm}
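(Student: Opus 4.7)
The approach is to diagonalise the representation matrix of $g$ and then exploit smoothness of $X$ to force an eigenvalue pattern that makes $g^k$ a pseudo-reflection of $\mathbb{P}^2$. Let $A\in{\rm GL}(3,\mathbb{C})$ be a representation matrix of $g$; after rescaling $A$ has finite order in ${\rm GL}(3,\mathbb{C})$ and is therefore diagonalisable, so we may choose coordinates $[x_0:x_1:x_2]$ on $\mathbb{P}^2$ with $A={\rm diag}(1,\alpha,\beta)$, where $\alpha$ and $\beta$ are roots of unity satisfying ${\rm lcm}({\rm ord}(\alpha),{\rm ord}(\beta))=k(d-1)$ in the inner case (respectively $kd$ in the outer case). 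Letting $F\in\mathbb{C}[x_0,x_1,x_2]$ be the defining polynomial of $X$, the condition $A(X)=X$ becomes $F\circ A=\lambda F$ for some $\lambda\in\mathbb{C}^{\times}$, equivalently $\alpha^{i_1}\beta^{i_2}=\lambda$ for every non-zero monomial $x_0^{i_0}x_1^{i_1}x_2^{i_2}$ of $F$.

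The key step uses smoothness of $X$. Because $X$ is smooth and $d\ge 4$, the polynomial $F$ is not divisible by any of $x_0$, $x_1$, $x_2$, so for each $j\in\{0,1,2\}$ some monomial of $F$ has $i_j=0$. Moreover ${\rm ord}(g)\ge 2(d-1)>d$ (respectively $\ge 2d>d$) exceeds the maximum order $d$ of a pseudo-reflection of $\mathbb{P}^2$ preserving a smooth plane curve of degree $d$, so $A$ itself is not a pseudo-reflection; hence $\alpha\neq 1$, $\beta\neq 1$, and $\alpha\neq\beta$. I then run a case analysis on which of ${\rm ord}(\alpha)$, ${\rm ord}(\beta)$, ${\rm ord}(\alpha/\beta)$ divides $k$. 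The three smoothness-forced monomials, combined with the eigenvalue relation, allow one to exclude every configuration in which none of these orders divides $k$; in the surviving configurations $A^k$ has two equal eigenvalues and one remaining eigenvalue of order $d-1$ (respectively $d$), so $A^k$ is a pseudo-reflection of that order.

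Finally, let $p\in\mathbb{P}^2$ be the isolated fixed point of $A^k$; after permuting coordinates, $p$ is a coordinate point. Testing the eigenvalue relation $\alpha^{i_1}\beta^{i_2}=\lambda$ against the monomial $x_j^d$ (where $j$ is the index corresponding to $p$) shows that the coefficient of $x_j^d$ in $F$ must vanish in the inner case (so $p\in X$) and must be non-zero in the outer case (so $p\notin X$). Since $A^k$ fixes $p$ and acts trivially on the pencil of lines through $p$, the automorphism $g^k$ lies in the Galois group of the projection $\pi_p$; as ${\rm ord}(g^k)=d-1$ (respectively $d$) matches the degree $[\mathbb{C}(X):K_p]$, the extension $\mathbb{C}(X)/K_p$ is Galois with $g^k$ generating its Galois group, so $p$ is the desired Galois point and $g^k$ is an automorphism belonging to it. The main obstacle is the case analysis of the second paragraph, which requires a careful combination of the three smoothness-forced monomial relations with the divisibility conditions in order to exclude the configurations where $A^k$ would still have three distinct eigenvalues.
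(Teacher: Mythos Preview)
The paper does not give its own proof of this statement: Theorem~\ref{thm:1} is quoted from Badr--Bars~[\ref{bio:1}], and no argument for it appears here. What the paper does reproduce (also from~[\ref{bio:1}]) is Theorem~\ref{thm:20} and Table~1, which classify all possible orders and diagonal representation matrices of automorphisms of smooth plane curves. A proof in the spirit of this paper would simply read the theorem off Table~1: if ${\rm ord}(g)=k(d-1)$ with $k\ge 2$, then among rows 1--8 only rows~3 and~5 admit such an order (rows 1,2,4,6,7,8 are excluded by elementary gcd computations since $d-1$ is coprime to $d$, $d-2$, and $d^2-3d+3=(d-1)(d-2)+1$), and in each of those two rows $g^k$ visibly becomes ${\rm diag}(e_{d-1},1,1)$ up to permutation. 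The outer case is analogous, with rows~3 and~6 surviving. So the ``paper's proof'' amounts to invoking an existing classification table.

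Your outline is headed in the right direction, but it is not yet a proof, and the gap is exactly the step you flag yourself. You assert that the smoothness constraints at the three coordinate points rule out every configuration in which $A^k$ still has three distinct eigenvalues, but you do not carry out that elimination. That elimination \emph{is} the content of the theorem; it is precisely the case split that produces Table~1 in~[\ref{bio:1}]. Two further remarks. First, the monomial information you extract from smoothness (``for each $j$ some monomial has $i_j=0$'') is weaker than what is actually needed: the useful constraint is that at each coordinate point $[e_j]$ either $x_j^d$ or some $x_j^{d-1}x_i$ occurs in $F$, and it is these $3\times 3$ choices that drive the case analysis. Second, in your last paragraph the sentence ``must be non-zero in the outer case'' is not obtained by ``testing against $x_j^d$''; rather, once $A^k={\rm diag}(e_d,1,1)$ one shows that every monomial of $F$ has $x_0$-degree in $\{0,d\}$, and then smoothness forces the $x_0^d$ coefficient to be present (else $F\in\mathbb{C}[x_1,x_2]$ and $X$ is singular). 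Writing out the full $3^3$ case split, or equivalently reproducing the relevant rows of Table~1, would complete your argument.
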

Smooth curves in $\mathbb P^2$ with Galois points are characterized by other methods as well [\ref{bio:11},\ref{bio:12},\ref{bio:13}].
There are smooth plane curves of degree $d$ with a linear automorphism of order $d-1$ or $d$ acting but without Galois points (see Examples \ref{exa:1} and \ref{exa:2}).
In addition, there is a smooth hypersurface $X$ of degree $d$ in $\mathbb P^4$ with a linear automorphism of order $(d-1)d$ acting but without Galois points (see Example \ref{exa:3}).
Therefore, Theorem \ref{thm:1} does not hold for all $n,k\geq1$. 

For $g\in$Aut$(X)$,
we set ${\rm Fix}(g):=\{x\in X\,|\,g(x)=x\ \}$, and 
we write the order of $g$ as ord$(g)$.
Recall that if $X$ is a smooth hypersurface and $(n,d)\not=(2,4)$,
then Aut$(X)$ is a subgroup of PGL$(n+2,\mathbb{C})$, i.e. all automorphisms of $X$ are linear.
In this paper, by using Fix$(g)$ and ord$(g)$,
we will study the case $k,n\geq1$ of Theorem \ref{thm:1}.
Our main results are Theorems \ref{thm:2}, \ref{thm:3}, \ref{thm:4}, and \ref{thm:5}.

Theorem \ref{thm:2}, is for $n=k=1$.
\begin{thm}\label{thm:2}
Let $X$ be a smooth plane curve degree $d\geq 4$, and $g$ be a linear automorphism of $X$.\\ 
$(1)$ If ${\rm ord}(g)=d-1$, 
then $\sharp|{\rm Fix}(g)|\not=2$ if and only if $X$ has an inner Galois point $p$, 
and $g$ is an automorphism belonging to the Galois point $p$.\\
$(2)$ If ${\rm ord}(g)=d$, 
then ${\rm Fix}(g)\not=\emptyset$ if and only if $X$ has an outer Galois point $p$, 
and $g$ is an automorphism belonging to the Galois point $p$.
\end{thm}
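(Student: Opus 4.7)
The plan is to diagonalize the representation matrix of $g$ and study the two possible eigenvalue patterns in conjunction with the smoothness of $X$. Since $g$ has finite order it is diagonalizable; after a projective change of coordinates we may assume $g = \mathrm{diag}(1, \beta, \gamma)$. Two cases arise: \emph{(A)} the three eigenvalues $1,\beta,\gamma$ are distinct, and $\mathrm{Fix}_{\mathbb{P}^2}(g)$ is the three coordinate points $[1:0:0], [0:1:0], [0:0:1]$; or \emph{(B)} two coincide, say $g=\mathrm{diag}(1,1,\gamma)$, and $\mathrm{Fix}_{\mathbb{P}^2}(g) = \{z=0\} \cup \{p\}$ where $p:=[0:0:1]$.

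First I establish the direction ``Galois point $\Rightarrow$ fixed-point count''. If $g$ belongs to a Galois point $p$, the identity $\pi_p\circ g = \pi_p$ shows $g$ preserves every line through $p$ setwise, so its induced action on the pencil of lines through $p$ is trivial, forcing a repeated eigenvalue --- i.e.\ Case (B). Normalizing $p = [0:0:1]$, the requirement that $F$ lie in a single $g$-weight space together with the condition $p \in X$ (inner) or $p \notin X$ (outer) forces
\[
F = L(x,y)\,z^{d-1} + G(x,y) \quad\text{(inner, } \mathrm{ord}(\gamma)=d-1 \text{)}
\]
or
\[
F = G(x,y) + c\,z^d \quad\text{(outer, } \mathrm{ord}(\gamma)=d,\ c\neq0 \text{)},
\]
with $\deg L = 1$ and $\deg G = d$. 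Smoothness of $X$ then forces $G$ to have $d$ distinct linear factors, so $X\cap\{z=0\}$ consists of $d$ fixed points of $g$; combined with $p$ this gives $|\mathrm{Fix}(g)|=d+1$ in part (1) and $|\mathrm{Fix}(g)|=d$ in part (2), both of which yield $|\mathrm{Fix}(g)|\neq 2$ and $\mathrm{Fix}(g)\neq\emptyset$ since $d\geq4$.

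For the converse I suppose we are in Case (A) and derive a contradiction with the hypothesis. The defining equation $F$ is a $g$-eigenvector of some weight $w$; a coordinate point lies on $X$ iff the corresponding monomial $x^d$, $y^d$, or $z^d$ has zero coefficient, and when such a point lies on $X$, smoothness there forces at least one of the two ``adjacent'' monomials $x_i^{d-1}x_j$ ($j\neq i$) to have nonzero coefficient. Both conditions translate into weight equations for $w$ via $\beta^{d-1}=\gamma^{d-1}=1$ in part (1), resp.\ $\beta^{d}=\gamma^{d}=1$ in part (2). A finite case analysis then yields: in part (1), $w$ must lie in $\{1,\beta,\gamma\}$ and for each choice exactly two coordinate points lie on $X$, so $|\mathrm{Fix}(g)|=2$; in part (2), any $w\neq1$ forces $\beta,\gamma$ to be primitive cube roots of unity and hence $\mathrm{ord}(g)=3$, contradicting $\mathrm{ord}(g)=d\geq4$, so $w=1$, no coordinate point lies on $X$, and $\mathrm{Fix}(g)=\emptyset$. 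Either outcome contradicts the hypothesis, so we must be in Case (B), and the previous paragraph supplies the required Galois point.

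The main obstacle is the smoothness bookkeeping in Case (A) for part (2), where one has to carefully rule out the accidental primitive-cube-root solutions using $d\geq4$; everything else is a routine translation between the eigenspace decomposition of $H^{0}(\mathbb{P}^2,\mathcal{O}(d))$ and the geometry of $X$.
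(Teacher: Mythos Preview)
Your argument is correct and follows the same underlying strategy as the paper: diagonalize the representation matrix of $g$, separate the case of three distinct eigenvalues from the case of a repeated eigenvalue, and use Theorem~\ref{thm:8} to identify the repeated-eigenvalue case with the existence of a Galois point. The difference is only in packaging. The paper invokes the Badr--Bars classification (Theorem~\ref{thm:20}/Table~1), which already records, for each possible diagonal form of $g$, the value of $n(g)=\sharp\bigl(X\cap\{[1{:}0{:}0],[0{:}1{:}0],[0{:}0{:}1]\}\bigr)$; it then simply observes that the only entries with $\mathrm{ord}(g)=d-1$ and $n(g)\neq 2$ (respectively $\mathrm{ord}(g)=d$ and $n(g)\neq 0$) already have the shape required by Theorem~\ref{thm:8}. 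You instead re-derive exactly the portion of that table you need: in your Case~(A) you compute directly, via the weights of $x^d,y^d,z^d$ and the smoothness constraint on the monomials $x_i^{d-1}x_j$, that $\sharp\mathrm{Fix}(g)=2$ when $\mathrm{ord}(g)=d-1$ and $\mathrm{Fix}(g)=\emptyset$ when $\mathrm{ord}(g)=d$ (the cube-root obstruction you mention being precisely what rules out $n(g)=3$ in line~8 of Table~1 for $d\geq 4$). This makes your proof self-contained at the cost of some extra bookkeeping, while the paper's version is shorter but depends on the external classification.

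One small expository point: in your converse direction you conclude ``so we must be in Case~(B), and the previous paragraph supplies the required Galois point,'' but the previous paragraph \emph{assumed} a Galois point. What you actually need (and what your analysis does provide) is that Case~(B) together with smoothness forces $F$ into the form $L(x,y)z^{d-1}+G(x,y)$ or $cz^d+G(x,y)$, since any other weight makes $F$ divisible by $z$ and hence reducible; then Theorem~\ref{thm:8} (or equivalently Theorem~\ref{thm:7}) yields the Galois point. It would be cleaner to state this implication in that order.
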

Theorem \ref{thm:3} is for $k=1$, $n\geq 2$, and an inner Galois point.
\begin{thm}\label{thm:3}
Let $X$ be a smooth hypersurface of degree $d\geq 4$ in $\mathbb P^{n+1}$, and $g\in{\rm Aut}(X)$ be a linear automorphism of order $d-1$.\\
(1) If $n=2$, then ${\rm Fix}(g)$ contains a curve $C'$ which is not a smooth rational curve if and only if 
$X$ has an inner Galois point $p$, and $g$ is an automorphism belonging to the Galois point $p$.\\
(2) If $n\geq 3$, then ${\rm Fix}(g)$ has codimension $1$ in $X$ if and only if
$X$ has an inner Galois point $p$, and $g$ is an automorphism belonging to the Galois point $p$.
\end{thm}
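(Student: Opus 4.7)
The plan is to diagonalise a representation matrix $A\in \mathrm{GL}(n+2,\mathbb{C})$ of $g$ and analyse its eigenspace decomposition together with the constraint $F(Ax)=c\,F(x)$ that $A$-invariance of $X$ imposes on the defining polynomial $F$. For the sufficiency, suppose $p$ is inner Galois and $g$ belongs to $p$. Then $g$ acts trivially on $K_p$ and hence preserves every line through $p$. A standard argument shows that a non-scalar linear map preserving every line through a given point $p$ has $p$ as a simple eigenvector with eigenvalue $\mu$ and acts by a single scalar $\lambda\neq\mu$ on a complementary hyperplane, and the finite order of $g$ makes $A$ diagonalisable with $\mu/\lambda$ a primitive $(d-1)$-th root of unity. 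In coordinates with $p=[1:0:\cdots:0]$ the fixed locus on $X$ is $\{p\}\cup(X\cap H)$ where $H=\{x_0=0\}\cong\mathbb{P}^n$; since $X$ is irreducible of degree $d\geq 4$ it does not contain $H$, so $X\cap H$ is a hypersurface of codimension one in $X$. For $n\geq 3$ this is the statement, while for $n=2$ the intersection is a plane curve of degree $d\geq 4$, which cannot be isomorphic to $\mathbb{P}^1$ since every smooth rational plane curve has degree $1$ or $2$.

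For the converse I would start with an arbitrary diagonalisation of $A$: if $\mathrm{Fix}(g)$ has codimension one in $X$ then some eigenspace $V_\alpha$ satisfies either \emph{(Case I)} $\dim\mathbb{P}(V_\alpha)=n$ with $\mathbb{P}(V_\alpha)\not\subset X$, or \emph{(Case II)} $\dim\mathbb{P}(V_\alpha)=n-1$ with $\mathbb{P}(V_\alpha)\subset X$. In Case I, $A$ has only two eigenvalues, of multiplicities $n+1$ and $1$; after putting $A=\mathrm{diag}(\mu,\lambda,\ldots,\lambda)$, the invariance $F(Ax)=cF(x)$ forces the $x_0$-exponent of every monomial appearing in $F$ to lie in a single residue class modulo $d-1$. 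Irreducibility and smoothness of $X$ eliminate every such class except the one giving $F=F_d(x_1,\ldots,x_{n+1})+x_0^{d-1}F_1(x_1,\ldots,x_{n+1})$, from which one reads off $F(p)=0$, so $p\in X$. Because $A$ preserves every line through $p$, $\pi_p\circ g=\pi_p$, hence $\langle g\rangle\subset\mathrm{Aut}(\mathbb{C}(X)/K_p)$; since $|\langle g\rangle|=d-1=[\mathbb{C}(X):K_p]$, Artin's lemma gives that $\mathbb{C}(X)/K_p$ is Galois with group $\langle g\rangle$, which is the desired conclusion.

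The main obstacle will be the exclusion of Case II. I would place coordinates so that $\mathbb{P}(V_\alpha)=\{x_n=x_{n+1}=0\}$ and split according to whether the remaining two eigenvalues of $A$ coincide (subcase IIa) or are distinct (IIb); in either case the condition $\mathbb{P}(V_\alpha)\subset X$ excludes the monomials of $F$ with $a_n=a_{n+1}=0$. A direct computation at a general point $q\in\mathbb{P}(V_\alpha)$ shows that $\partial F/\partial x_j(q)=0$ for $j<n$, while $\partial F/\partial x_n(q)$ and $\partial F/\partial x_{n+1}(q)$ restrict to forms $G,H$ of degree $d-1$ in $x_0,\ldots,x_{n-1}$ (with at most one of them surviving in subcase IIb). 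Smoothness of $X$ along $\mathbb{P}(V_\alpha)\cong\mathbb{P}^{n-1}$ then demands that $G,H$ have no common zero in $\mathbb{P}^{n-1}$, which is impossible for $n\geq 3$ by a dimension count in $\mathbb{P}^{n-1}$; thus Case II is empty in that range and only Case I survives. When $n=2$, subcase IIa remains possible, but there the only curve contained in $\mathrm{Fix}(g)$ is the line $\mathbb{P}(V_\alpha)$, a smooth rational curve; the hypothesis of part (1) thus rules Case II out and again forces Case I.
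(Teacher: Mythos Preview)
Your argument is correct and follows the same route as the paper's proof (given there as Theorem~3.6): diagonalise $A$, split into the case of an $(n{+}1)$-dimensional eigenspace (your Case~I, the paper's $\mathrm{diag}(a,I_{n+1})$) versus an $n$-dimensional eigenspace whose projectivisation lies in $X$ (your Case~II, the paper's $\mathrm{diag}(a,b,I_n)$), and exclude Case~II for $n\geq 3$ by observing that smoothness along the contained $(n{-}1)$-plane forces two degree-$(d-1)$ forms on $\mathbb{P}^{n-1}$ to have no common zero. The only difference is cosmetic: where the paper simply invokes its matrix criterion (Theorem~2.3) to dispatch Case~I and the ``only if'' direction, you unwind those citations by hand via the residue-class analysis of the $x_0$-exponent and Artin's lemma.

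One small point worth tightening in your ``only if'' argument for $n=2$: you assert that the plane curve $X\cap H$ of degree $d\geq 4$ cannot be isomorphic to $\mathbb{P}^1$, but the hypothesis asks for an irreducible \emph{component} that is not smooth rational, so you must also exclude the possibility that $X\cap H$ decomposes into lines and conics. This is immediate once you note that in the normal form $x_1x_0^{d-1}+F(x_1,x_2,x_3)$, smoothness of $X$ at points with $x_0=0$ forces $\{F=0\}\subset\mathbb{P}^2$ itself to be smooth, hence irreducible of genus $\binom{d-1}{2}>0$. The paper glosses over this as well, merely citing Theorem~2.3.
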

Theorem \ref{thm:4} is for $k=1$, $n\geq 2$, and an outer Galois point.
\begin{thm}\label{thm:4}
Let $X$ be a smooth hypersurface of degree $d$ in $\mathbb P^{n+1}$, and $g\in{\rm Aut}(X)$ be a linear automorphism of order $d$.
If $d\geq 2$, then ${\rm Fix}(g)$ has codimension $1$ in $X$ if and only if $X$ has an outer Galois point $p$, and $g$ is an automorphism belonging to the Galois point $p$.
\end{thm}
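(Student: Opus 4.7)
The plan is to prove both implications separately, reducing the $n=1$ case to Theorem~\ref{thm:2}(2) so that the substantive work concerns $n\geq 2$.

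For the forward direction, suppose $p$ is an outer Galois point and $g$ belongs to $p$. By the theorem from [\ref{bio:20},\ref{bio:21},\ref{bio:22}] cited earlier, $g$ is linear and $\langle g\rangle$ is the cyclic Galois group of $\pi_p\colon X\to\mathbb P^n$. Transitivity of the Galois action on fibers forces every line $\ell$ through $p$ to be $g$-invariant; in particular $p$ is fixed by $g$, and on each $\ell$ the element $g$ acts as an order-$d$ automorphism of $\mathbb P^1$, contributing a second fixed point $q_\ell$. As $\ell$ varies over the $\mathbb P^n$ of lines through $p$, the $q_\ell$ sweep out an $n$-dimensional subset of $\operatorname{Fix}(g)$, which is the disjoint union of projectivized eigenspaces of a representation matrix $A$. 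Hence this subset must be a hyperplane $H\not\ni p$. In coordinates with $p=(1:0:\cdots:0)$ and $H=\{x_0=0\}$, we get $A=\operatorname{diag}(\zeta_d,1,\ldots,1)$ and $\operatorname{Fix}(g)\cap X=H\cap X$, a hyperplane section of codimension $1$ in $X$.

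For the converse with $n\geq 2$, write $\operatorname{Fix}(g)=\bigsqcup_\mu\mathbb P(V_\mu)$ as the disjoint union of projectivized eigenspaces of $A$. Since $\operatorname{Fix}(g)\cap X$ has dimension $n-1$, some $\mathbb P(V_\mu)\cap X$ has dimension $n-1$, leaving only two possibilities: either (i) $\dim V_\mu=n+1$ and $\mathbb P(V_\mu)$ is a hyperplane meeting $X$ in a hyperplane section, or (ii) $\dim V_\mu=n$ and $\mathbb P(V_\mu)\cong\mathbb P^{n-1}$ is contained in $X$. Case (ii) is eliminated by smoothness: adopting coordinates in which the $V_\mu$-directions carry eigenvalue $1$ and the two residual coordinates carry eigenvalues $\mu_1,\mu_2$, the invariance $F(Ax)=\lambda F(x)$ of the defining form constrains its monomials to a single eigen-weight. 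When $\mu_1\neq\mu_2$, only one of the two normal first-order coefficients of $F$ along $\mathbb P(V_\mu)$ can be nonzero; that coefficient is a homogeneous polynomial of positive degree on $\mathbb P^{n-1}$, so it vanishes at some point, contradicting smoothness of $X$ at the corresponding point of $\mathbb P(V_\mu)$. When $\mu_1=\mu_2$, a parallel argument shows that the associated $2$-dimensional eigenspace also lies in $X$ and that every point of $\mathbb P(V_\mu)$ becomes a critical point of $F$.

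Thus only case (i) occurs: $A$ has exactly two distinct eigenvalues with an isolated fixed point $p$, and in coordinates with $p=(1:0:\cdots:0)$ and the fixed hyperplane $\{x_0=0\}$ we may take $A=\operatorname{diag}(\zeta_d,1,\ldots,1)$. Writing $F=\sum_{i=0}^{d}f_i(x_1,\ldots,x_{n+1})x_0^i$, the invariance $F(Ax)=\lambda F$ gives $\zeta_d^i=\lambda$ for every nonzero $f_i$, and cone/factorization considerations combined with the smoothness of $X$ leave only $F=f_0+c\,x_0^d$ with $c\neq 0$ and $\{f_0=0\}$ smooth in $\mathbb P^n$. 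Then $p\notin X$, the projection from $p$ sends $(x_0:\cdots:x_{n+1})\mapsto(x_1:\cdots:x_{n+1})$ with fibers $x_0^d=-f_0/c$, and $\langle g\rangle$ permutes each fiber cyclically, so $p$ is an outer Galois point and $g$ belongs to it. The main obstacle is the case analysis in the converse: verifying in every sub-case of (ii) that $g$-invariance and smoothness of $X$ cannot simultaneously hold when a projective linear subspace of dimension $n-1$ is pointwise fixed and contained in $X$.
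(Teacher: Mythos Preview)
Your proposal is correct and follows essentially the same route as the paper. Both arguments diagonalize $g$, observe that a codimension-one fixed locus forces some eigenspace to have projective dimension at least $n-1$, split into the two cases $\dim V_\mu\in\{n+1,n\}$, and eliminate the second case by combining semi-invariance of $F$ with the Jacobian criterion; the paper's proof is Theorem~\ref{thm:10}(3), where the two matrix forms $\operatorname{diag}(a,I_{n+1})$ and $\operatorname{diag}(a,b,I_n)$ are exactly your cases (i) and (ii). For the easy direction the paper simply invokes Theorem~\ref{thm:8}, whereas you reprove the relevant normal form geometrically via the action on lines through $p$; and in case (i) you re-derive $F=f_0+cx_0^{\,d}$ rather than cite Theorem~\ref{thm:8}, but these are presentational differences, not a different method.

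One small imprecision: in your sub-case $\mu_1=\mu_2$ the claim ``every point of $\mathbb{P}(V_\mu)$ becomes a critical point of $F$'' is not literally true when the common weight forces $a+b=1$ in every monomial, i.e.\ $F=X_0G_{10}+X_1G_{01}$; there $\mathbb{P}(V_\mu)=\{X_0=X_1=0\}$ need not be singular, but instead the points of the complementary eigenline $\{X_2=\cdots=X_{n+1}=0\}$ are singular (this is exactly what the paper checks). The dichotomy is: either all monomials have $a+b\geq 2$ and $\mathbb{P}(V_\mu)$ is singular, or $a+b=1$ throughout and the eigenline is singular. Either way the contradiction stands, so your sketch is easily completed.
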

The following Theorem is for $n,k\geq 2$ and an inner Galois point.
\begin{thm}\label{thm:5}
Let $X$ be a smooth hypersurface of degree $d$ in $\mathbb P^{n+1}$, and $g\in{\rm Aut}(X)$ be a linear automorphism of order $k(d-1)$ for $k\geq 2$.\\
(1) If $n=2$ and $\sharp|{\rm Fix}(g)|\geq5$, then $X$ has an inner Galois point $p$, and $g^k$ is an automorphism belonging to the Galois point $p$.\\
(2) If $n\geq 3$ and ${\rm Fix}(g)$ has codimension $1$ or $2$ in $X$, then $X$ has an inner Galois point $p$, and $g^k$ is an automorphism belonging to the Galois point $p$.
\end{thm}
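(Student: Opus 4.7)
The plan is to reduce the statement to Theorem \ref{thm:3} applied to $g^k$. Since $g$ has order $k(d-1)$, the element $g^k$ has order $d-1$. The key observation is that, after diagonalizing a representation matrix of $g$ as $\mathrm{diag}(\lambda_0,\ldots,\lambda_{n+1})$, the projective eigenspaces of $g^k$ are obtained by direct-summing the eigenspaces $V_{\lambda_i}$ of $g$ whose eigenvalues share a common $k$-th power. Consequently $\mathrm{Fix}(g)\subseteq\mathrm{Fix}(g^k)$ in $\mathbb{P}^{n+1}$, and $g^k$ is nontrivial since $\mathrm{ord}(g^k)=d-1\geq 3$.

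For Part~(2), in the codimension-$1$ sub-case this inclusion immediately yields $\mathrm{codim}_X(\mathrm{Fix}(g^k)\cap X)\leq 1$, with equality because $g^k\neq\mathrm{id}$. In the codimension-$2$ sub-case, the hypothesis forces some eigenspace $V_\lambda$ of $g$ to have projective dimension $n-1$ and not to be contained in $X$ (the alternative case, where $V_\lambda$ has dimension $n-2$ and lies in $X$, turns out to force an auxiliary eigenspace of $g$ to sit inside $X$ as well, upgrading the fixed locus to codimension $1$ and contradicting the hypothesis). So $g$ has the pattern $n+1+1$, and up to coordinates $g=\mathrm{diag}(1,\ldots,1,\mu,\nu)$; the $g$-equivariance of $F$ then restricts the monomials of $F$ to pairs $(c_n,c_{n+1})$ with $\mu^{c_n}\nu^{c_{n+1}}=1$. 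A smoothness analysis of $X$ at the coordinate points $(0{:}\cdots{:}0{:}1{:}0)$ and $(0{:}\cdots{:}0{:}0{:}1)$ and along $V_\lambda\cap X$ shows that unless $\mu^k=1$ or $\nu^k=1$, the surviving monomials force all partial derivatives of $F$ to vanish at one of those coordinate points, contradicting smoothness. Hence $\mu^k=1$ or $\nu^k=1$, so in $g^k$ the corresponding multiplicity-$1$ eigenspace merges with $V_\lambda$ to produce a hyperplane eigenspace. Then $\mathrm{Fix}(g^k)\cap X$ contains a hyperplane section of $X$, so has codimension $1$, and Theorem \ref{thm:3}(2) applied to $g^k$ completes Part~(2).

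For Part~(1), the hypothesis $\sharp|\mathrm{Fix}(g)|\geq 5$ rules out the multiplicity pattern $1+1+1+1$ for $g$ on $\mathbb{P}^3$ (at most four fixed points on $X$). For $k\geq 2$, the patterns $3+1$ and $2+2$ are incompatible with $X$ smooth of degree $d\geq 4$: the former forces $F=x_3^e G(x_0,x_1,x_2)$ for a single exponent $e$, yielding a reducible or cone-type singular hypersurface; the latter forces $F$ to be linear in $(x_2,x_3)$ with coefficients of degree $d-1$ in $(x_0,x_1)$, making $X$ rational and contradicting $d\geq 4$. So $g$ has pattern $2+1+1$, and the same coordinate-point smoothness argument as in Part~(2) forces one of the multiplicity-$1$ eigenvalues $\mu$ or $\nu$ to satisfy $\mu^k=1$ or $\nu^k=1$; consequently $g^k$ has pattern $3+1$ with a plane eigenspace $\Pi\subset\mathbb{P}^3$. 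The intersection $\Pi\cap X\subset\mathrm{Fix}(g^k)$ is a plane curve of degree $d\geq 4$, which is not a smooth rational curve: a smooth plane curve of degree $d\geq 4$ has genus $\binom{d-1}{2}\geq 3$, while a singular plane curve is not smooth at all. Theorem \ref{thm:3}(1) applied to $g^k$ then finishes the proof. The main obstacle throughout is the case-by-case smoothness analysis pinning down the $k$-th-power relations among the eigenvalues of $g$; this amounts to a concrete but somewhat technical exponent-lattice computation in the $g$-invariant monomial space of $F$.
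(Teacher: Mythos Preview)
Your overall strategy — diagonalize $g$, use smoothness to constrain the eigenvalues, and deduce that $g^k$ has a single ``exceptional'' eigenvalue so that Theorem~\ref{thm:8} (or equivalently Theorem~\ref{thm:3}) applies — is the same as the paper's. The case analysis you sketch for the pattern $n+1+1$ (respectively $2+1+1$ when $n=2$) is exactly what the paper carries out in detail: one examines which of the coordinate points $[0:\cdots:0:1:0]$, $[0:\cdots:0:1]$ lie on $X$, reads off monomials of the form $X_j X_n^{d-1}$ or $X_j X_{n+1}^{d-1}$ forced by smoothness, and from $\mu^{c_n}\nu^{c_{n+1}}=1$ together with $\mu^{k(d-1)}=\nu^{k(d-1)}=1$ one indeed obtains $\mu^k=1$ or $\nu^k=1$ in every surviving case. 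Your exclusion of the patterns $3+1$ and $2+2$ for $n=2$ is also correct.

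However, your treatment of the alternative in Part~(2) — the case where an eigenspace $V_\lambda$ of projective dimension $n-2$ is contained in $X$ — is wrong as stated. You claim this ``forces an auxiliary eigenspace of $g$ to sit inside $X$ as well, upgrading the fixed locus to codimension~1''. That is not what happens: the remaining eigenspaces are points (or at most a line), and their inclusion in $X$ does not change $\dim\mathrm{Fix}(g)$. The correct argument, which the paper uses, is an eigenvalue/smoothness clash. Write $g=\mathrm{diag}(a,b,c,I_{n-1})$ with $a,b,c,1$ pairwise distinct, and $V_\lambda=\{X_0=X_1=X_2=0\}\subset X$, so $F=\sum_{i=0}^2 F_i(X_3,\ldots,X_{n+1})X_i+(\text{higher in }X_0,X_1,X_2)$. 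Along $V_\lambda$ the only nonvanishing partials are $\partial F/\partial X_i=F_i$ for $i=0,1,2$, so smoothness forces these to have no common zero on $\mathbb{P}^{n-2}$. But the term $F_iX_i$ has eigenvalue $a,b,c$ respectively; since these are distinct, $g$-semi-invariance of $F$ permits at most one $F_i\neq 0$, and a single nonconstant $F_i$ always has zeros on $\mathbb{P}^{n-2}$ for $n\ge 3$. This contradiction eliminates the case. (One must also check the degenerate patterns $(n-1)+2+1$ and $(n-1)+3$; the same linear-term argument, together with the order constraint $\mathrm{ord}(g)=k(d-1)$, disposes of them.) Once this is fixed, your reduction to Theorem~\ref{thm:3} goes through and is equivalent to the paper's direct appeal to Theorem~\ref{thm:8}.
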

Theorem \ref{thm:5} does not fold for an outer Galois point (see Example \ref{exa:6}).
For $n=1$, the automorphism groups of curves with Galois points are classified ([\ref{bio:1},\ref{bio:8}]).
There are studies on automorphism groups of plane curves using Galois points ([\ref{bio:1},\ref{bio:10},\ref{bio:11},\ref{bio:12},\ref{bio:16},\ref{bio:17}]).
For the case $n\geq 2$, determining whether $X$ has Galois points from the structure of ${\rm Aut}(X)$ may be an important issue.
\begin{que}\label{q:1}
For $n\geq 1$, is there a group $G_n$ satisfying the following condition ?
The condition: If the automorphism group ${\rm Aut}(X)$ of a smooth hypersurface $X$ of degree $d\geq 4$ in $\mathbb P^{n+1}$ has a subgroup $H$ which is isomorphic to $G$ as a group, then $X$ has a Galois point.
\end{que}
Theorem \ref{thm:1} is an answer to Question \ref{q:1} for the case $n=1$. 
However, our main theorems are not answers to Question \ref{q:1}, because they need the fixed points set.
Section 2 is preliminary. We will explain the basic facts of Galois point.
In section 3, we will show Theorems \ref{thm:2}, \ref{thm:3}, \ref{thm:4}, and \ref{thm:5}.
\section{Preliminary} 
Let $X$ be a smooth hypersurface of degree $d\geq4$ in $\mathbb P^{n+1}$. 
We denote the number of inner (resp. outer) Galois points of $X$ by $\delta(X)$ (resp. $\delta'(X)$). 
Here $[s]$ represents the integer part of $s\in\mathbb R$.
\begin{thm}\label{thm:6}$([\ref{bio:20},\ref{bio:21},\ref{bio:22}])$. 
Let $X$ be a smooth hypersurface of degree $d\geq 4$ in $\mathbb P^{n+1}$. 
The following holds.\\	
(1) If $n=1$, then $\delta(X)=0,1$, or $4$, and  $\delta'(X)=0,1$, or $3$.
In particular, if $n=1$ and $d\geq 5$, then $\delta(X)=0$ or $1$.\\
(2) If $n\geq2$ and $d=4$, then $\delta(X)\leq 4([\frac{n}{2}]+1)$. 
In particular, if $n=2$ and $d=4$, then $\delta(X)=0,1,2,4$, or $8$.\\
(3) If $n\geq2$ and $d\geq5$, then $\delta(X)\leq [\frac{n}{2}]+1$.\\
(4) If $n\geq2$ and $d\geq4$, then $\delta'(X)\leq n+2$.
\end{thm}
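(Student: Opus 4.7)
The plan is to pin down each Galois point by a normal form of the defining equation of $X$ and then bound how many such normal forms can coexist. By the structure theorem quoted above, an inner Galois point $p \in X$ produces a cyclic generator $\sigma_p \in \mathrm{Aut}(X)$ of order $d-1$ whose fixed locus in $\mathbb{P}^{n+1}$ contains $p$. Placing $p$ at a coordinate vertex and diagonalising $\sigma_p$, the $\sigma_p$-invariance of the equation forces the defining polynomial of $X$ into the shape $x_{n+1}^{d-1} L(x_0,\ldots,x_n) + F_d(x_0,\ldots,x_n)$ for some nonzero linear form $L$ and some degree-$d$ form $F_d$. An outer Galois point analogously yields the Fermat-type normal form $x_{n+1}^d + G_d(x_0,\ldots,x_n)$.

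Next, if $p_1,\ldots,p_m$ are distinct Galois points of the same type, the generators $\sigma_{p_1},\ldots,\sigma_{p_m}$ generate a finite subgroup of $\mathrm{Aut}(X) \subset \mathrm{PGL}(n+2,\mathbb{C})$, and comparing the normal forms attached to the different $p_i$'s imposes strong restrictions on $X$. For outer points, one shows the collective symmetry is rigid enough that the $p_i$'s can be placed simultaneously among the coordinate vertices of $\mathbb{P}^{n+1}$, immediately giving $m \leq n+2$ and proving (4). For inner points, a parallel argument uses the tangent hyperplane $T_{p_i}X$ and its residual intersection $T_{p_i}X \cap X$ in the role of the vertex, producing the bounds (2) and (3); the exceptional factor of $4$ when $d = 4$ reflects additional symmetries of the resulting quartic residual intersection. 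For plane curves ($n = 1$), the fixed locus of each $\sigma_{p_i}$ on $X$ is a finite set tightly controlled by Riemann--Hurwitz, and the combined constraints leave only the explicit lists $\delta(X) \in \{0,1,4\}$ and $\delta'(X) \in \{0,1,3\}$, with the Fermat curve $x_0^d + x_1^d + x_2^d = 0$ realising the extremes and with $d \geq 5$ killing the top values for inner points.

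The main obstacle will be controlling precisely which finite subgroups of $\mathrm{PGL}(n+2,\mathbb{C})$, generated by diagonalisable elements of orders $d-1$ or $d$ each with the prescribed fixed-hyperplane structure, can actually stabilise a smooth hypersurface. The low-degree and low-dimensional exceptional cases ($d = 4$, $n = 2$) are the most delicate because their invariant theory admits extra solutions, accounting for the larger bound $4([n/2]+1)$ in (2) and the nonmonotone list $\{0,1,2,4,8\}$ when $(n,d) = (2,4)$; each sharp value would finally be certified by exhibiting an extremal Fermat- or Klein-type hypersurface that attains it.
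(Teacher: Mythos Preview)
The paper does not prove this theorem. It is stated in Section~2 (Preliminary) as a result quoted from Yoshihara's papers \cite{bio:20,bio:21,bio:22}, with no argument supplied; the present paper simply uses these bounds as known background. There is therefore no in-paper proof against which to compare your proposal.

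Your sketch is broadly in the spirit of the original references---normal forms for the defining equation, plus control of the finite subgroup of $\mathrm{PGL}(n+2,\mathbb{C})$ generated by the Galois automorphisms---but as written it is an outline, not a proof. The specific numerical bounds are nowhere derived: you do not explain why inner points give $[n/2]+1$ rather than, say, $n+2$ (in Yoshihara's argument this comes from a pairing constraint on inner Galois points that you have not identified), nor why the plane-curve lists are exactly $\{0,1,4\}$ and $\{0,1,3\}$. You also assert that for outer points the generators $\sigma_{p_i}$ can be simultaneously placed at coordinate vertices without arguing that they commute or that the group they generate is abelian. Your final paragraph correctly flags these group-theoretic questions as ``the main obstacle,'' but then does not resolve them; that is precisely the content of the cited papers.
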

The numbers of Galois points of normal hypersurfaces are investigated ([\ref{bio:5},\ref{bio:19}]).

The defining equations for smooth hypersurfaces with a Galois point are determined.
\begin{thm}\label{thm:7}$([\ref{bio:20},\ref{bio:21},\ref{bio:22}])$.
Let $X$ be a smooth hypersurface of degree $d\geq 4$ in $\mathbb P^{n+1}$. 
The following holds.\\
(1) $X$ has an inner Galois point $p$ if and only if by replacing the local coordinate system if necessary, $p=[1:0:\cdots:0]$ and  $X$ is defined by
\[ X_1X_0^{d-1}+F(X_1,\ldots,X_{n+1})=0.\]
(2) $X$ has an outer Galois point $p$ if and only if
by replacing the local coordinate system if necessary, $p=[1:0:\cdots:0]$ and $X$ is defined by
\[ X_0^d+F(X_1,\ldots,X_{n+1})=0.\]
\end{thm}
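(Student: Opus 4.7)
The plan is to prove the two equivalences in (1) and (2) separately; the ``if'' directions are direct constructions and the ``only if'' directions rest on a coordinate-and-eigenvalue analysis together with the earlier theorem that the Galois group is generated by a linear automorphism. For the ``if'' direction of (1), suppose $X$ is defined by $X_1 X_0^{d-1} + F(X_1, \dots, X_{n+1}) = 0$ with $p = [1:0:\cdots:0]$; let $\zeta$ be a primitive $(d-1)$-th root of unity and let $g \in \mathrm{PGL}(n+2, \mathbb{C})$ send $X_0$ to $\zeta X_0$ and fix $X_1, \dots, X_{n+1}$. The substitution preserves the defining polynomial, so $g$ restricts to an automorphism of $X$; since it acts trivially on $[X_1:\cdots:X_{n+1}]$ it acts trivially on $K_p$. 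Because $\mathrm{ord}(g) = d - 1 = [\mathbb{C}(X) : K_p]$, $g$ generates $\mathrm{Gal}(\mathbb{C}(X)/K_p)$, so $p$ is an inner Galois point. The ``if'' direction of (2) is identical, with $\zeta$ a primitive $d$-th root of unity.

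For the ``only if'' direction, let $p$ be a Galois point. By the earlier theorem stating that the Galois group of $\mathbb{C}(X)/K_p$ is induced by a linear automorphism of $X$, there is $g \in \mathrm{Aut}(X)$ generating $\mathrm{Gal}(\mathbb{C}(X)/K_p)$ of order $m = d-1$ in the inner case and $m = d$ in the outer case. Choose coordinates so that $p = [1:0:\cdots:0]$. Since $g$ acts trivially on $K_p$, it preserves every fiber of $\pi_p$ that meets $X \setminus \{p\}$, hence every line through $p$ meeting $X$, and by Zariski-closedness every line through $p$. A projective transformation preserving every line through $p$ must fix $p$ and act as a scalar on the complementary hyperplane, so after rescaling a representation matrix of $g$ has the form $\mathrm{diag}(\zeta, 1, \dots, 1)$ with $\zeta$ a primitive $m$-th root of unity.

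Write the defining polynomial as $F = \sum_{i=0}^{d} X_0^i F_i(X_1, \dots, X_{n+1})$ with $\deg F_i = d - i$. Invariance of $X$ under $g$ forces all indices $i$ with $F_i \neq 0$ to share a common residue modulo $m$. A case analysis on this residue class, using irreducibility (to exclude $F = X_0^c F_c$ when only one index survives), smoothness (to exclude $X$ being a cone with vertex $p$, i.e.\ no $X_0$ appearing in $F$), and the inner/outer dichotomy (determined by whether $F_d \neq 0$), singles out exactly one viable possibility in each case: the inner case forces $m = d-1$ with surviving indices $\{0, d-1\}$, yielding $F = F_0 + X_0^{d-1} F_{d-1}$ with $F_{d-1}$ a nonzero linear form that a further linear change of $X_1, \dots, X_{n+1}$ normalises to $F_{d-1} = X_1$; the outer case forces $m = d$ with surviving indices $\{0, d\}$ and $F_d \neq 0$, yielding $F = a X_0^d + F_0$ with $a \neq 0$, which scaling turns into the form in (2). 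The main technical obstacle is the diagonalization step in the second paragraph: translating the algebraic Galois condition (triviality on $K_p$) into the geometric statement that $g$ preserves every line through $p$; once this is settled, the monomial-residue analysis is entirely routine.
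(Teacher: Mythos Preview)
Your proof is correct and essentially complete. Note, however, that the paper does not give its own proof of this statement: Theorem~\ref{thm:7} is quoted as a known result from Yoshihara's papers [\ref{bio:20},\ref{bio:21},\ref{bio:22}], so there is no in-paper argument to compare against. Your argument is in fact the standard one and matches how the result is proved in those references: the ``if'' direction is the obvious cyclic-automorphism construction, and the ``only if'' direction combines Theorem~1.2 (the Galois group is linear and cyclic) with the observation that a linear map acting trivially on $K_p$ must preserve every line through $p$, hence is diagonalizable as $\mathrm{diag}(\zeta,1,\dots,1)$, after which the monomial-residue bookkeeping is forced.

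One small point worth tightening in your write-up: the sentence ``act as a scalar on the complementary hyperplane'' tacitly uses that $g$ has finite order (hence is diagonalizable) to pass from the a priori upper-triangular form $\mu I_{n+2}+e_0\phi^{T}$ to the diagonal form; without finiteness of order one would only get a transvection-type map. This is implicit in your setup (you know $\mathrm{ord}(g)=m$), but it is the one place a reader might pause, so it is worth a half-sentence.
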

The definition equations with many Galois points are also studied (please see [\ref{bio:22}] for more detailed results). 

For a positive integer $l$, let  $I_l$ be the identity matrix of size $l$, and $e_l$ be a  primitive $l$-th root of unity.
Theorem \ref{thm:8} below is a rewrite of Theorem \ref{thm:7} from the viewpoint of a liner automorphism.
\begin{thm}\label{thm:8}
Let $X$ be a smooth hypersurface of degree $d$ in $\mathbb P^{n+1}$, $g\in{\rm Aut}(X)$ be a linear automorphism of order $d-1$ $(resp.\ d)$, and $A$ be a representation matrix of $g$.
There is a Galois point $p$ of $X$ such that $g$ is an automorphism belonging to the Galois point $p$ if and only if 
the matrix $A$ is conjugate to a matrix 
\[\begin{pmatrix}
a & 0\\
0&bI_{n+1}
\end{pmatrix}\]
such that $\frac{a}{b}=e_{d-1}$ $(resp.\ e_d)$.
In particular, if $A$ is conjugate to the above matrix, then 
the Galois point $p$ is the eigenvector corresponding to the eigenvalue $a$.
\end{thm}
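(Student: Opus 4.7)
My approach is to deduce Theorem~\ref{thm:8} from Theorem~\ref{thm:7} by passing between the defining equation of $X$ and the representation matrix of $g$. Both directions become essentially linear-algebraic once one diagonalises $A$ and locates the Galois point at $p = [1:0:\cdots:0]$.

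For the ``only if'' direction, I apply Theorem~\ref{thm:7} to move the Galois point $p$ to $[1:0:\cdots:0]$ and bring $X$ into the standard form $X_1 X_0^{d-1} + F(X_1,\ldots,X_{n+1}) = 0$ (inner case) or $X_0^d + F(X_1,\ldots,X_{n+1}) = 0$ (outer case). The automorphism $h$ acting by multiplication of $X_0$ by $e_{d-1}$ (resp.\ $e_d$) manifestly preserves $X$, has the required order, and fixes each of $X_1,\ldots,X_{n+1}$, hence acts trivially on $K_p$; so $h$ generates $\mathrm{Gal}(\mathbb{C}(X)/K_p)$. Since $g$ generates the same cyclic group, $g = h^m$ for some $m$ coprime to the order, and its representation matrix is $\mathrm{diag}(e_{d-1}^m,1,\ldots,1)$ (resp.\ $\mathrm{diag}(e_d^m,1,\ldots,1)$), which has the required block form with ratio a primitive root of unity of the appropriate order.

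For the ``if'' direction, conjugate so that $A = \mathrm{diag}(a, bI_{n+1})$ with $a/b$ a primitive $(d-1)$-th (resp.\ $d$-th) root of unity, and set $p=[1:0:\cdots:0]$. Expand the defining polynomial as $F = \sum_{j=0}^{d} X_0^j G_j(X_1,\ldots,X_{n+1})$. The condition $g(X)=X$ forces $A^{*}F = \lambda F$, and the $j$th summand is scaled by $b^d (a/b)^j$, so the support $\{j : G_j \neq 0\}$ lies in a single residue class modulo $d-1$ (resp.\ $d$). Within $\{0,\ldots,d\}$ this leaves at most the candidates $\{0,d-1\}$ and $\{1,d\}$ in the order $d-1$ case, and $\{0,d\}$ in the order $d$ case, plus singleton supports. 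The singleton supports force $X$ to contain a hyperplane or to be singular at $p$, and the candidate $\{1,d\}$ factors as $X_0(G_1 + cX_0^{d-1})$, which is reducible. After a further block-diagonal coordinate change in $X_1,\ldots,X_{n+1}$ that normalises the linear form $G_{d-1}$ to $X_1$ --- such a change commutes with $bI_{n+1}$ and so preserves the shape of $A$ --- the equation matches Theorem~\ref{thm:7}, so $p$ is a Galois point; and $g$, having the right order and fixing $K_p$ pointwise (since it scales all $X_i$ with $i \geq 1$ by the same factor $b$), belongs to $p$. The ``in particular'' clause is immediate, as $[1:0:\cdots:0]$ is by construction the $a$-eigenvector.

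The main obstacle I anticipate is the smoothness bookkeeping in the ``if'' direction: pinning down which supports for $F$ are actually realised by a smooth hypersurface, in particular ruling out the support $\{1,d\}$ via reducibility and eliminating the singleton supports by a short local check at $p$. Once this case analysis is settled, the rest is routine linear algebra.
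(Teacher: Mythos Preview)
Your argument is correct and matches the paper's intent. The paper does not give a separate proof of Theorem~\ref{thm:8}; it simply introduces the statement as ``a rewrite of Theorem~\ref{thm:7} from the viewpoint of a linear automorphism'' and leaves the translation to the reader. What you have written is precisely that translation carried out in full: diagonalising the representation matrix, reading off the support of $F$ in powers of $X_0$ from the eigenvalue condition, and eliminating the spurious supports (singletons and $\{1,d\}$) by smoothness and irreducibility. This is exactly the bookkeeping the paper suppresses, so your approach and the paper's are the same in spirit; you have just made the argument explicit.

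One minor remark: the paper writes $\frac{a}{b}=e_{d-1}$ with $e_{d-1}$ a fixed primitive root, but throughout the paper this is used with the looser meaning ``$a/b$ is a primitive $(d-1)$-th root of unity'' (as is necessary, since any generator of the Galois group must be covered). Your treatment, which allows $g=h^m$ for $\gcd(m,d-1)=1$ and hence $a/b=e_{d-1}^m$, is the correct reading.
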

From Theorem \ref{thm:8}, we see that the only if parts of Theorems \ref{thm:3} and \ref{thm:4} holds.

From here, we give examples of smooth hypersurfaces of degree $d$ without Galois points which have a linear automorphism such that the order is a multiple of $d-1$ or $d$.
As a corollary of Theorem \ref{thm:8}, we give the following two lemmas.
\begin{lem}\label{lem:1}
Let $X$ be a smooth hypersurface of degree $d\geq4$ in $\mathbb P^{n+1}$, $p\in\mathbb P^{n+1}$, and $g$ be an automorphism belonging to the Galois point $p$.
For any linear automorphism $h$ of $X$, $h(p)$ is also a Galois point of $X$, and $h\circ g\circ h^{-1}$ is an automorphism belonging to the Galois point $h(p)$.
In particular, if $p$ is an inner (resp. outer) Galois point, then $h(p)$ is also an inner (resp. outer) Galois point.
\end{lem}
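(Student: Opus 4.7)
The plan is a direct invocation of Theorem \ref{thm:8}. By hypothesis, $g$ is an automorphism belonging to the Galois point $p$, so Theorem \ref{thm:8} supplies a representation matrix $A$ of $g$ and a matrix $P \in \mathrm{GL}(n+2,\mathbb C)$ with
\[ P^{-1} A P = \begin{pmatrix} a & 0 \\ 0 & b I_{n+1} \end{pmatrix}, \qquad \frac{a}{b} = e_{d-1} \text{ or } e_d, \]
and, moreover, $p$ is (up to scalar) the eigenvector of $A$ for the eigenvalue $a$.

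For the linear automorphism $h$, fix a representation matrix $H$. Then $h \circ g \circ h^{-1}$ has representation matrix $H A H^{-1}$, and conjugating by $HP$ yields
\[ (HP)^{-1} (H A H^{-1}) (HP) = P^{-1} A P, \]
so $H A H^{-1}$ lies in the same $\mathrm{PGL}(n+2,\mathbb C)$-conjugacy class as $A$. Applying the ``if'' direction of Theorem \ref{thm:8} to the linear automorphism $h \circ g \circ h^{-1}$, I obtain that there is a Galois point $q$ of $X$ to which $h \circ g \circ h^{-1}$ belongs, and $q$ is the eigenvector of $H A H^{-1}$ for the eigenvalue $a$. The identification $q = h(p)$ is then immediate from
\[ (H A H^{-1})(Hp) = H A p = a \cdot H p, \]
together with the fact that the eigenspace of $A$ (equivalently, of $HAH^{-1}$) for $a$ is one-dimensional, since the remaining eigenvalues are all equal to $b \neq a$.

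For the inner/outer dichotomy, note that $h$ is an automorphism of $X$, so $h(X) = X$, whence $p \in X$ if and only if $h(p) \in X$. Moreover, the ratio $a/b$ is a conjugation invariant of $A$, so the Galois point $h(p)$ has the same type (inner or outer) as $p$. There is no real obstacle in this argument; the whole content is the observation that Theorem \ref{thm:8} characterizes the Galois-belonging condition purely in terms of the $\mathrm{PGL}$-conjugacy class of the representation matrix, which is manifestly preserved by conjugation and which transports the distinguished eigenvector $p$ to $h(p)$.
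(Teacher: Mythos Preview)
Your proof is correct and follows essentially the same approach as the paper: both arguments reduce the claim to a direct application of Theorem~\ref{thm:8}, using that conjugation by $H$ preserves the conjugacy class of the representation matrix and carries the distinguished eigenvector $p$ to $h(p)$. The paper's proof is a one-line invocation of Theorem~\ref{thm:8}, whereas you spell out the eigenvector computation and the inner/outer dichotomy explicitly, but the underlying idea is identical.
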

\begin{proof}
By a linear automorphism $h\circ g\circ h^{-1}$ and Theorem \ref{thm:8}, $h(p)$ is a Galois point of $X$, and  
$h\circ g\circ h^{-1}$ is an automorphism belonging to the Galois point $h(p)$.
\end{proof}
\begin{lem}\label{lem:2}
Let $X$ be a smooth hypersurface of degree $d\geq4$ in $\mathbb P^{n+1}$, $p\in\mathbb P^{n+1}$, and $g$ be an automorphism belonging to the Galois point $p$.
For a linear automorphism $k$ of $X$ such that $k(p)=p$, we get that $k\circ g=g\circ k$.
\end{lem}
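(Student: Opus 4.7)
The plan is to combine Lemma~\ref{lem:1} with the normal form provided by Theorem~\ref{thm:8}. First, applying Lemma~\ref{lem:1} with $h=k$ and using the hypothesis $k(p)=p$, I conclude that $k\circ g\circ k^{-1}$ is again an automorphism belonging to the Galois point $p$. Hence both $g$ and $k\circ g\circ k^{-1}$ generate the Galois group of $\mathbb{C}(X)/K_p$, which is cyclic of order $m:=d-1$ in the inner case and $m:=d$ in the outer case. Therefore
\[ k\circ g\circ k^{-1}=g^{i} \text{ in } {\rm PGL}(n+2,\mathbb{C}) \]
for some $i$ with $\gcd(i,m)=1$, and it suffices to show $i\equiv 1\pmod m$.

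To pin down $i$, I would invoke Theorem~\ref{thm:8} and choose coordinates so that $p=[1:0:\cdots:0]$ and the representation matrix of $g$ takes the form $G={\rm diag}(a,b,\ldots,b)$ with $a/b=e_m$. The hypothesis $k(p)=p$ says $p$ is an eigenvector of any representation matrix of $k$, so after scaling
\[ K=\begin{pmatrix} c & v^{T} \\ 0 & M \end{pmatrix}, \]
with $c\in\mathbb{C}^{\times}$, $v\in\mathbb{C}^{n+1}$, and $M\in{\rm GL}(n+1,\mathbb{C})$.

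The core step is to rewrite the ${\rm PGL}$-equality $k\circ g\circ k^{-1}=g^{i}$ as the matrix identity $KG=\lambda\,G^{i}K$ for some $\lambda\in\mathbb{C}^{\times}$ and compare block entries. The $(1,1)$ entry gives $ca=\lambda a^{i}c$, i.e. $\lambda=a^{1-i}$; the lower-right $(n+1)\times(n+1)$ block gives $bM=\lambda b^{i}M$, i.e. $\lambda=b^{1-i}$. Equating these yields $(a/b)^{1-i}=e_m^{1-i}=1$, forcing $i\equiv 1\pmod m$ as required. Hence $k\circ g\circ k^{-1}=g$ in ${\rm PGL}(n+2,\mathbb{C})$, i.e. $k\circ g=g\circ k$.

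The only subtlety is the bookkeeping between ${\rm GL}$ and ${\rm PGL}$, which is absorbed into the scalar $\lambda$; everything else reduces to a routine block computation once Theorem~\ref{thm:8} puts $g$ in normal form and the conjugacy class of $k\circ g\circ k^{-1}$ is constrained by Lemma~\ref{lem:1}.
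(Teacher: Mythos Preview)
Your argument is correct and follows the same approach as the paper: first use Lemma~\ref{lem:1} to see that $k\circ g\circ k^{-1}$ belongs to $p$, then invoke Theorem~\ref{thm:8} to force $k\circ g\circ k^{-1}=g$. The paper compresses the second step into a single sentence (``By Theorem~\ref{thm:8}, $k\circ g\circ k^{-1}=g$''), whereas you have unpacked it via the block computation showing $i\equiv 1\pmod m$; this is exactly the content that justifies the paper's appeal, so the two proofs are essentially identical with yours being the more explicit version.
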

\begin{proof}
By Lemma \ref{lem:1}, $k\circ g\circ k^{-1}$ is an automorphism belonging to the Galois point $p$.
By Theorem \ref{thm:8}, $k\circ g\circ k^{-1}=g$.
\end{proof}
In Example \ref{exa:1}, we give an example of a smooth plane curve of degree $d$ with a linear automorphism of order $d-1$ but has no Galois points.
Before that, we prepare a lemma.
\begin{lem}\label{lem:3}
Let $A:=(a_{ij})$ be a diagonal $m\times m$ matrix such that $a_{ii}\not=a_{jj}$ for $1\leq i<j \leq m$. 	
For a $m\times m$ matrix $B:=(b_{ij})$, if $AB=BA$, then $B$ is a diagonal matrix. 
\end{lem}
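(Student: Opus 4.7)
The plan is to verify the claim by direct entry-wise computation, exploiting the diagonal form of $A$ and the distinctness of its diagonal entries.

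First, I would fix indices $i,j$ with $1\leq i,j\leq m$ and compute the $(i,j)$-entries of both $AB$ and $BA$. Because $A$ is diagonal, only one term survives in each of these sums: $(AB)_{ij}=\sum_k a_{ik}b_{kj}=a_{ii}b_{ij}$, and similarly $(BA)_{ij}=\sum_k b_{ik}a_{kj}=a_{jj}b_{ij}$.

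Next, I would impose the hypothesis $AB=BA$, which yields the scalar equation $(a_{ii}-a_{jj})\,b_{ij}=0$ for every pair $(i,j)$. For $i\neq j$ the assumption $a_{ii}\neq a_{jj}$ makes $a_{ii}-a_{jj}$ a nonzero complex number, so it may be cancelled to conclude $b_{ij}=0$. Ranging over all off-diagonal pairs shows that every off-diagonal entry of $B$ vanishes, which is exactly the assertion that $B$ is diagonal.

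There is essentially no obstacle here: the lemma is a standard fact about simultaneous diagonalization, and the only thing that must be used is the distinctness of the diagonal entries of $A$, which is precisely what drives the cancellation in the last step. I would write the proof in a single short paragraph mirroring the computation above.
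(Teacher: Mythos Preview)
Your proof is correct and follows essentially the same approach as the paper: compute the $(i,j)$-entries of $AB$ and $BA$ using that $A$ is diagonal, equate them, and cancel $a_{ii}-a_{jj}\neq 0$ for $i\neq j$ to obtain $b_{ij}=0$. If anything, your version is slightly cleaner in treating all $i\neq j$ at once rather than only $i<j$.
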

\begin{proof}
We assume that $AB=BA$.
The $(i,j)$-th entry of the matrix $AB$ is $a_{ii}b_{ij}$.
The $(i,j)$-th entry of the matrix $BA$ is $a_{jj}b_{ij}$.
Since $a_{ii}\not=a_{jj}$ for $\leq i<j \leq m$, we get that $b_{ij}=0$ for $\leq i<j \leq m$.
Then the matrix $B$ is a diagonal matrix.
\end{proof}
\begin{exa}\label{exa:1}
Let $d$ be an even number of $6$  or more, and $X$ be a smooth curve in $\mathbb P^2$  defined by 
\[X_2^d+X^{d-1}_0X_2+X^{d-1}_1X_2+X_0^{\frac{d}{2}}X_1^{\frac{d}{2}}=0.\]
The curve $X$ has an automorphism $g$ of order $d-1$ such that the following matrix $A$ is a representation matrix of $g$:
\[A:=
\begin{pmatrix}
e_{d-1}^{\frac{d}{2}} &0 &0\\
0 & e_{d-1}^{\frac{d}{2}-1} &0\\
0 & 0 &1
\end{pmatrix}.\]
For $1\leq i<d-1$, we get that $\frac{d}{2}i\not \equiv0$ $({\rm mod}\ d-1)$, $(\frac{d}{2}-1)i\not\equiv0$ $({\rm mod}\ d-1)$, and $\frac{d}{2}i\not\equiv(\frac{d}{2}-1)i$ $({\rm mod}\ d-1)$.
We assume that $X$ has a Galois point $p\in  \mathbb P^2$.
By Lemma \ref{lem:1}, $g^j(p)$ is a Galois point for $1\leq j<d-1$.
By Theorem \ref{thm:6}, $\delta(X)\leq 4$ and $\delta'(X)\leq3$.
Since $d\geq 6$, $g^l(p)=p$ for some $1\leq l< d-1$.
Let $h\in{\rm Aut}(X)$ be an automorphism belonging to the Galois point $p$.
Since $g^l(p)=p$, the automorphism $g^l\circ h\circ g^{-l}$ is also an automorphism belonging to the Galois point $p$.
Then $g^l\circ h\circ g^{-l}=h^i$ for some $1\leq i<d-1$.
By Theorem \ref{thm:8}, we can take a representation matrix $B$ of $h$ such that 
\[CBC^{-1}=
\begin{pmatrix}
e_k &0 &0\\
0 & 1 &0\\
0 & 0 &1
\end{pmatrix}\]
 for some a matrix $C$ where if $p\in X$, then $k=d-1$, and if $p\not \in X$, then $k=d$.
By the equation $g^l\circ h\circ g^{-l}=h^i$, we get that $i=1$, and $A^lBA^{-l}=B$.
Since the diagonal entries of $A^l$ are different from each other, Lemma \ref{lem:3}, and $A^lBA^{-l}=B$, we get that $B$ is a diagonal matrix.
Since $h=B$ is an automorphism belonging to the Galois point $p$, and Theorem \ref{thm:8},
we get that 
\[p\in\{[1:0:0],[0:1:0],[0:0:1]\},\]
and the matrix $B$ is one of the following matrices
\[
\begin{pmatrix}
	a &0 &0\\
	0 & b &0\\
	0 & 0 &b
\end{pmatrix}, 
\begin{pmatrix}
	b & 0 &0\\
	0 & a &0\\
	0 & 0 &b
\end{pmatrix},\  {\rm and}\  
\begin{pmatrix}
	b &0 &0\\
	0 & b &0\\
	0 & 0 &a
\end{pmatrix}\]
where if $p\in X$, then $\frac{a}{b}=e_{d-1}$, and if $p\not \in X$, then $\frac{a}{b}=e_d$.
The defining equation of $X$ implies that $h=B$ is not an automorphism of $X$. 
This is a contradiction.
Therefore, $X$ does not have Galois points.
\end{exa}
Below is an example of a smooth plane curve of degree $d$ with a linear automorphism $d$ but has no Galois points.
\begin{exa}\label{exa:2}
Let $d_1$ and $d_2$ be integers greater than $4$ such that gcd$(d_1,d_2)=1$.
Let $d:=d_1d_2$, and $X$ be a smooth curve in $\mathbb P^2$  defined by 
\[X_0^d+X_1^d+X_2^d+X^{d_1}_0X^{d_2}_1X_2^{d-d_1-d_2}=0.\]
The curve $X$ has an automorphism $g$ of order $d$ such that the following matrix $A$ is a representation matrix of $g$:
\[A:=
\begin{pmatrix}
e_{d_1} &0 &0\\
0 & e_{d_2} &0\\
0 & 0 &1
\end{pmatrix}.\] 
For $1\leq i<d$, the diagonal entries of $A^i$ are different from each other.
As like Example \ref{exa:1},
we get that $X$ does not have Galois points.
\end{exa}
We give an example of a smooth surface $X$ of degree $d\geq 4$ in $\mathbb P^3$ such that $X$ has a linear automorphism $g$ of order $(d-1)d$ but has no Galois points.
\begin{exa}\label{exa:3}
Let $d_1\geq 5$ be an odd integer, and $d:=2d_1+1$.
Let $X$ be a smooth surface of degree $d$ in $\mathbb P^3$ defined by 
\[X_0^d+X_0^{\frac{d+1}{2}}X_1^{\frac{d-1}{2}}+X_0X_1^{d-1}+X_2^{d-1}X_3+X_2X_3^{d-1}=0.\]
The surface $X$ has an automorphism $g$ of order $(d-1)d$ such that the following matrix $A$ is a representation matrix of $g$
\[
A:=
\begin{pmatrix}
e_{\frac{(d-1)}{2}d}^{1-d}&0&0\\
0&e_{\frac{(d-1)}{2}d}&0\\
0&0&0&1\\
0&0&1&0
\end{pmatrix}.
\]
In addition,
the surface $X$ has an automorphism $h$ of order $(d-2)\frac{(d-1)}{2}d$ such that the following matrix $B$ is a representation matrix of $h$
\[
B:=
\begin{pmatrix}
	e_{\frac{(d-1)}{2}d}^{1-d}&0&0\\
	0&e_{\frac{(d-1)}{2}d}&0\\
	0&0&e_{d-2}&0\\
	0&0&0&e_{d-2}^{-1}
\end{pmatrix}.
\]
For $1\leq i<\frac{(d-1)}{2}d$, the diagonal entries of $B^i$ are different from each other.
By Theorem \ref{thm:6}, $\delta(X)\leq 2$ and $\delta'(X)\leq4$.
Since $\frac{(d-1)}{2}d\geq 5$, if $X$ has a Galois point, then there is a Galois point $p$ of $X$ such that $g^l(p)=p$ for some $1\leq l<\frac{(d-1)}{2}d$.
As like Example \ref{exa:1}, this is a contradiction.
Then $X$ does not have Galois points.
\end{exa}
From here, based on [\ref{bio:1}], we explain the orders of automorphisms of smooth plane curves of degree $d\geq 4$.
Let $X$ be a smooth plane curve of degree $d\geq4$, and $g$ be an automorphism of $X$.
By replacing the local coordinate system if necessary, we may assume that $g$ is defined by a diagonal matrix, i.e.  $g=
\begin{pmatrix}
\alpha &0 &0\\
0 &\beta &0\\
0 & 0 &\gamma
\end{pmatrix}$. 
Let 
\[ n(g):=\sharp|{\rm Fix}(g)\cap\{[1:0:0],[0:1:0],[0:0:1]\}|.\]
Since $g$ is defined by a diagonal matrix, $n(g)=X\cap\{[1:0:0],[0:1:0],[0:0:1]\}$.
Then $n(g)=0,1,2$, or $3$.
The following Theorem \ref{thm:20} determines orders of cyclic groups acting on smooth plane curves. 
Theorem \ref{thm:2} is shown by Theorems \ref{thm:8} and \ref{thm:20}.

For a smooth hypersurface $X\subset\mathbb P^{n+1}$,
orders of automorphisms of $X$ and the structure of the group Aut$(X)$ are studied for $n\geq 1$ ([\ref{bio:2},\ref{bio:8},\ref{bio:100},\ref{bio:101},\ref{bio:103},\ref{bio:104}]).
Also, as in [\ref{bio:191},\ref{bio:103}], the structures of subgroups of Aut$(X)$ are also investigated based on the way they act on $X$.
In this paper, we examine automorphisms of $X$ that give Galois points.
At the end of this section, we classify abelian groups acting on smooth plane curves (Theorem \ref{thm:21}).
\begin{thm}\label{thm:20}$([\ref{bio:1}])$.
Let $X$ be a smooth curve of degree $d\geq 4$ in $\mathbb P^2$, and $g$ be an automorphism of $X$.
By replacing the local coordinate system if necessary, the order of $g$ and a representation matrix of $g$ are one of Table 1.
\end{thm}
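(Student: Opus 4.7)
The plan is to exploit the fact that for $d\geq 4$ every automorphism of a smooth plane curve is induced by an element of $\mathrm{PGL}(3,\mathbb C)$, so $g$ has a representation matrix $A$ of the same finite order. First I would diagonalize $A$ by conjugation, so that $A=\mathrm{diag}(\alpha,\beta,\gamma)$ with $\alpha,\beta,\gamma$ roots of unity; this is exactly the coordinate change allowed in the statement. Writing $m=\mathrm{ord}(g)$, every monomial $X_0^aX_1^bX_2^c$ appearing with non-zero coefficient in the defining equation $F$ of $X$ must satisfy $\alpha^a\beta^b\gamma^c=\lambda$ for one common scalar $\lambda$, and after rescaling the representative $A$ in $\mathrm{GL}(3,\mathbb C)$ we may normalize $\lambda=1$.

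Next I would split the argument according to $n(g)$, i.e.\ the number of coordinate vertices $P_0=[1:0:0]$, $P_1=[0:1:0]$, $P_2=[0:0:1]$ lying on $X$. Smoothness of $X$ determines precisely which low-weight monomials must be present: if $P_0\notin X$ then the coefficient of $X_0^d$ is non-zero, giving the relation $\alpha^d=1$; if $P_0\in X$ then the $X_0^d$ coefficient vanishes, but smoothness at $P_0$ forces at least one of $X_0^{d-1}X_1$, $X_0^{d-1}X_2$ to appear, giving $\alpha^{d-1}\beta=1$ or $\alpha^{d-1}\gamma=1$. The analogous statements at $P_1$ and $P_2$, together with the existence of enough further invariant monomials $X_0^aX_1^bX_2^c$ (spread out enough to cut out an irreducible smooth curve, not merely a product of lines through the vertices), form the input of a finite case analysis in $n(g)=0,1,2,3$.

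For each case I would then solve the resulting multiplicative system in $\alpha,\beta,\gamma$, observing that the relations pin down the triple of eigenvalues up to finitely many normal forms depending on divisibility conditions on $d$, and that the order $m$ is bounded roughly by $d(d-1)$, coming from the maximal cyclic group compatible with both an invariant monomial of degree $d$ and the smoothness monomials of degree $d$. Each admissible triple $(\alpha,\beta,\gamma)$ together with the corresponding $m$ then contributes a row to Table~1. The main obstacle I expect is the combinatorial bookkeeping required to ensure completeness: for every candidate row one must exhibit an explicit smooth $X$ realizing the claimed automorphism, and conversely verify that no additional pair $(m,A)$ slips through the smoothness check along the coordinate axes $X_i=0$, where equations with high-order symmetry are exactly the ones most prone to acquiring singularities on those axes.
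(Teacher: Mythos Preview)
The paper does not supply its own proof of this theorem: it is quoted from Badr--Bars (reference~[\ref{bio:1}]) and stated without argument, so there is no in-paper proof to compare your proposal against. Your outline is in fact the standard strategy used in that reference---diagonalize the lift to $\mathrm{GL}(3,\mathbb C)$, normalize so that the defining polynomial is semi-invariant with trivial character, and then read off multiplicative relations among the eigenvalues from the monomials forced by smoothness at the three coordinate points---so methodologically you are on the right track.

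That said, what you have written is a plan rather than a proof, and the part you flag yourself (``combinatorial bookkeeping'') is exactly where the content lies. In each of the cases $n(g)=0,1,2,3$ you must not only record the relations coming from the forced monomials $X_i^d$ or $X_i^{d-1}X_j$ but also argue why the order $l$ of $g$ must divide the specific integer listed in Table~1 (e.g.\ $d$, $d-1$, $(d-1)d$, $(d-1)^2$, $(d-2)d$, $d^2-3d+3$), and in several subcases this requires a further split depending on \emph{which} tangent monomial $X_i^{d-1}X_j$ actually occurs. Your remark that one should ``exhibit an explicit smooth $X$ realizing the claimed automorphism'' is not needed for the theorem as stated (the claim is only that every pair $(\mathrm{ord}(g),A)$ appears in the table, not that every row is realized), so you can drop that direction. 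Conversely, your appeal to ``enough further invariant monomials \ldots\ to cut out an irreducible smooth curve'' is too vague to do work; the actual constraints all come from the vertices and the three coordinate lines, and that is what you should make precise.
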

\begin{table}[h]
\caption{Cyclic groups of smooth plane curves of degree $d\geq 4$}
\label{table:data_type}
\centering
\begin{tabular}{|c|c|c|c|}
\hline
No.&$n(g)$&Order $l$ of $g$&Representation matrix of $g$ \\ \hline 
1&$0$&$l$ divides $d$& 
$\begin{pmatrix}
e_l^s &0 &0\\
0 &e_l^t &0\\
0 & 0 &1
\end{pmatrix}$\\ \hline
2&$1$&$l$ divides $d-1$& 
$\begin{pmatrix}
e_l &0 &0\\
0 &1 &0\\
0 & 0 &1
\end{pmatrix}$\\ \hline
3&$1$&$l$ divides $(d-1)d$& 
$\begin{pmatrix}
e_l &0 &0\\
0 &e_l^{1-d} &0\\
0 & 0 &1
\end{pmatrix}$\\ \hline
4&$2$&$l$ divides $d-1$& 
$\begin{pmatrix}
e_l^s &0 &0\\
0 & e_l^t &0\\
0 & 0 &1
\end{pmatrix}$\\ \hline
5&$2$&$l$ divides $(d-1)^2$& 
$\begin{pmatrix}
e_l^{1-d} &0 &0\\
0 & e_l &0\\
0 & 0 &1
\end{pmatrix}$\\ \hline
6&$2$&$l$ divides $(d-2)d$& 
$\begin{pmatrix}
e_l &0 &0\\
0 & e_l^{1-d} &0\\
0 & 0 &1
\end{pmatrix}$\\ \hline
7&$3$&$l$ divides $d-1$& 
$\begin{pmatrix}
e_l &0 &0\\
0 &1 &0\\
0 & 0 &1
\end{pmatrix}$\\ \hline
8&$3$&$l$ divides $d^2-3d+3$& 
$\begin{pmatrix}
e_l &0 &0\\
0 & e_l^{d-1} &0\\
0 & 0 &1
\end{pmatrix}$\\ \hline
\end{tabular}
\end{table}
\begin{thm}\label{thm:21}
Let $X$ be a smooth plane curve of degree $d\geq4$, and $G$ be an abelian subgroup of Aut$(X)$.
If $G$ is not a cyclic group, then $G$ is isomorphic to a subgroup of $\mathbb Z/d\mathbb Z^{\oplus 2}$ as a group.
\end{thm}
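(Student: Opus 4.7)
The plan is to reduce to the case in which $G$ is simultaneously diagonal, and then to read off its structure from Theorem~\ref{thm:20}.

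I would first pick an element $g\in G$ of maximal order $l$ and use Theorem~\ref{thm:20} to choose coordinates so that $g=\mathrm{diag}(\alpha,\beta,\gamma)$. For every $h\in G$, abelianness in $\mathrm{PGL}(3,\mathbb{C})$ gives $hgh^{-1}=\lambda_h g$ for some $\lambda_h\in\mathbb{C}^*$, and computing determinants shows $\lambda_h^3=1$. If $\lambda_h\neq 1$ for some $h$, the map $x\mapsto\lambda_h x$ must permute the three diagonal entries of $g$, so these entries form a geometric progression and $g^3$ is a scalar matrix. Hence $g$ has order $3$ in $\mathrm{PGL}(3,\mathbb{C})$, and by maximality of $l$ every element of $G$ has order dividing $3$, i.e.\ $G\cong(\mathbb{Z}/3)^{\oplus s}$. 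A direct check of the semi-invariance of the defining polynomial of $X$ under the Heisenberg generators $\mathrm{diag}(1,\omega,\omega^2)$ and a cyclic coordinate permutation shows that a smooth $X$ of degree $d\geq 4$ admits such an action only when $3\mid d$ and $s=2$, yielding $G\cong(\mathbb{Z}/3)^{\oplus 2}\hookrightarrow(\mathbb{Z}/d)^{\oplus 2}$.

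Otherwise $\lambda_h=1$ for every $h\in G$, so every $h$ commutes with $g$ in $\mathrm{GL}(3,\mathbb{C})$. When $\alpha,\beta,\gamma$ are pairwise distinct, Lemma~\ref{lem:3} forces every $h$ to be diagonal in the same basis. When two of the eigenvalues coincide, each $h$ is block-diagonal with respect to the eigenspace decomposition of $g$, and the image of $G$ in $\mathrm{PGL}(2,\mathbb{C})$ acting on the two-dimensional eigenspace is abelian, hence either cyclic or a Klein four group. In the cyclic sub-case I would perform a further basis change within the two-dimensional eigenspace to simultaneously diagonalise all of $G$ with $g$; the Klein four sub-case I would exclude for $d\geq 4$ by writing down the most general polynomial of degree $d$ semi-invariant under $g$ and an involution swapping two coordinates, and verifying that it has a singular point.

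Once $G$ is simultaneously diagonal it is a finite subgroup of the maximal torus inside $\mathrm{PGL}(3,\mathbb{C})$, hence $G\cong\mathbb{Z}/m\oplus\mathbb{Z}/n$. To finish I would show that both $m$ and $n$ divide $d$. Since $G$ is non-cyclic one can pick two independent diagonal generators; the rows of Table~1 list the possible orders of a single diagonal automorphism, and among those the orders $d-1$, $(d-1)d$, $(d-1)^2$, $(d-2)d$, $d^2-3d+3$ do not divide $d$. A case-by-case inspection shows that each of these ``bad'' rows forces the defining polynomial of $X$ to consist of very few monomials, and that the simultaneous presence of a second commuting diagonal automorphism outside $\langle g\rangle$ is then incompatible with the smoothness of $X$; hence only rows with order dividing $d$ survive and $G\hookrightarrow(\mathbb{Z}/d)^{\oplus 2}$. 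The hard part is precisely this last case analysis: one must rule out, by smoothness of $X$, every way in which a non-cyclic abelian $G$ could contain an element of order exceeding $d$. The Heisenberg and Klein four reductions in the intermediate step are secondary obstacles, both resolved by inspecting the semi-invariant monomials of degree $d$ on $X$.
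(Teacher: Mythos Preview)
Your overall architecture---split off the Heisenberg-type case via the scalar $\lambda_h$, then reduce the remaining case to a simultaneously diagonal $G$---matches the paper's. Two points, one minor and one substantive.

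The minor one: the Klein four digression is an artifact of passing to $\mathrm{PGL}(2,\mathbb C)$ rather than $\mathrm{GL}(2,\mathbb C)$. In the block-diagonal situation, normalise every $h\in G$ so that its $(3,3)$ entry is $1$; comparing $(3,3)$ entries then shows that any two such normalised matrices commute in $\mathrm{GL}(3,\mathbb C)$, hence their $2\times 2$ blocks commute in $\mathrm{GL}(2,\mathbb C)$. A finite commuting family of diagonalisable matrices is simultaneously diagonalisable, so $G$ is diagonal with no further case split. This is exactly the paper's embedding $\vartheta:G\hookrightarrow\mathrm{GL}(2,\mathbb C)$, and it makes the Klein four sub-case (and your proposed singularity argument for it) disappear.

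The substantive gap is your last step. You delegate the conclusion ``every order divides $d$'' to a row-by-row elimination on Table~1 that you call ``the hard part'' and do not perform. The paper replaces this entirely with one observation: once $G$ is diagonal, the three coordinate points $[1{:}0{:}0]$, $[0{:}1{:}0]$, $[0{:}0{:}1]$ are common fixed points of $G$; if any of them lay on the smooth curve $X$, then $G$ would act faithfully on the one-dimensional tangent space there and hence embed in $\mathbb C^\ast$, forcing $G$ cyclic, contrary to hypothesis. Thus none of the coordinate points lies on $X$, so the defining equation contains nonzero $X_0^d$, $X_1^d$, $X_2^d$ terms, and any diagonal automorphism $\mathrm{diag}(\alpha,\beta,\gamma)$ must satisfy $\alpha^d=\beta^d=\gamma^d$. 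Every element of $G$ therefore has order dividing $d$, and $G\hookrightarrow(\mathbb Z/d\mathbb Z)^{\oplus 2}$. That single idea---non-cyclic abelian groups have no smooth common fixed point on a curve---is what your plan is missing; without it the proposed case analysis is both unwritten and unnecessarily laborious.
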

\begin{proof}
Since $d\geq4$, $G$ is a finite subgroup of ${\rm PGL}(3,\mathbb C)$.
Let $l:=$max$\{{\rm ord}(k)\,|\,k\in G\}$.
We take an element $g\in G$ such that ord$(g)=l$.  
By replacing the local coordinate system if necessary, we may assume that $g$ is defined by a diagonal matrix.

First, we assume that
$g=\begin{pmatrix}
	\alpha &0 &0\\
	0 &\alpha &0\\
	0 & 0 &\beta
\end{pmatrix}$ where $\alpha,\beta\in\mathbb C^{\ast}$.
For simplicity, we may assume that 
$\alpha=e_l$ and $\beta=1$.
Let $h$ be an element of $G$ such that $h\not\in \langle g\rangle$, and $A:=(a_{ij})_{1\leq i,j\leq3}$ be a representation matrix of $h$.
Since $g\circ h=h\circ g$, we get that 
\[ 
\begin{pmatrix}
u &0 &0\\
0 &u &0\\
0 & 0 &u
\end{pmatrix}
\begin{pmatrix}
e_l &0 &0\\
0 & e_l &0\\
0 & 0 &1
\end{pmatrix}
\begin{pmatrix}
a_{11} &a_{12} &a_{13}\\
a_{21} &a_{22} &a_{23}\\
a_{31} &a_{32} &a_{33}
\end{pmatrix}
=
\begin{pmatrix}
a_{11} &a_{12} &a_{13}\\
a_{21} &a_{22} &a_{23}\\
a_{31} &a_{32} &a_{33}
\end{pmatrix}
\begin{pmatrix}
e_l &0 &0\\
0 & e_l &0\\
0 & 0 &1
\end{pmatrix},
\]
and hence 
$
\begin{pmatrix}
ue_la_{11} &ue_la_{12} &ue_la_{13}\\
ue_la_{21} &ue_la_{22} &ue_la_{23}\\
ua_{31} &ua_{32} &ua_{33}
\end{pmatrix}
=
\begin{pmatrix}
e_la_{11} &e_la_{12} &a_{13}\\
e_la_{21} &e_la_{22} &a_{23}\\
e_la_{31} &e_la_{32} &a_{33}
\end{pmatrix}$
for $u\in \mathbb C^{\ast}$.
If $u\not=1$, then $a_{11}=a_{12}=a_{21}=a_{22}=a_{33}=0$.
Since $A$ is invertible, this is a contradiction.
Therefore, $u=1$.
Then $a_{13}=a_{23}=a_{31}=a_{32}=0$.
This means that
there is an injective homomorphism 
\[\vartheta:G\ni k \mapsto C\in {\rm GL}(2,\mathbb C)\ \  {\rm such\ that}\ \  k=\begin{pmatrix}
C &0\\
0 &1
\end{pmatrix}.\]
Since $\vartheta(G)$ is an abelian but not cyclic subgroup of ${\rm GL}(2,\mathbb C)$, 
there are two matrices $S_1,S_2\in {\rm GL}(2,\mathbb C)$ such that $\vartheta(G)=\langle S_1\rangle \oplus\langle S_2\rangle $.
In order to show $G\subset \mathbb Z/d\mathbb Z^{\oplus 2}$, we only show that ord$(g')$ is a divisor of $d$ for any $g'\in G$.
Since $G\cong \vartheta(G)=\langle S_1\rangle \oplus\langle S_2\rangle$, by replacing the local coordinate system if necessary, we may assume that $G$ is generated by two diagonal matrices.
We assume that $p:=[1:0:0]\in X$.
Since $G$ is generated by diagonal matrices, we get that $p\in {\rm Fix}(g)$ for any $g\in G$.
Since dim\,$X=1$, and $X$ is smooth, we get that $G$ is a cyclic group.
This contradicts that $G$ is not a cyclic group.
Therefore, we get that $[1:0:0]\not\in X$.
Similarly, we get that $[0:1:0],[0:0:1]\not\in X$.
Since $[1:0:0],[0:1:0],[0:0:1]\not\in X$, $X$ is defined by 
\[ aX^d+bY^d+cZ^d+\sum_{i=0}^{d-1}F_{d-i}(Y,Z)X^i=0\]
where $abc\not=0$, $F_{d-i}(Y,Z)$ is a homogeneous polynomial of degree $d-i$ for $0\leq i\leq d-1$, and $F_0(Y,Z)$ has no $Y^d$ and $Z^d$ terms.
Then since $G$ is generated by diagonal matrices, we get that ord$(g')$ is a divisor of $d$ for any $g'\in G$.
Therefore, $G$ is a subgroup of $\mathbb Z/d\mathbb Z^{\oplus 2}$.
\\

Next, we assume that there is not an element $g'\in G$ such that a representation matrix of $g'$ is conjugate to 
$
\begin{pmatrix}
\alpha' &0 &0\\
0 &\alpha' &0\\
0 & 0 &\beta'
\end{pmatrix}
$ where $\alpha',\beta'\in\mathbb C^{\ast}$.
Then we may assume that
$g=\begin{pmatrix}
e_l^s &0 &0\\
0 & e_l^t &0\\
0 & 0 &1
\end{pmatrix}$
where $e_l^s\not=e_l^t$, $e_l^s\not=1$, and $e_l^t\not=1$.
Let $h$ be an element of $G$ such that $h\not\in \langle g\rangle$, and $A:=(a_{ij})_{1\leq i,j\leq3}$ be a representation matrix of $h$.
Since $g\circ h=h\circ g$,
$
\begin{pmatrix}
ue_l^sa_{11} &ue_l^sa_{12} &ue_l^sa_{13}\\
ue_l^ta_{21} &ue_l^ta_{22} &ue_l^ta_{23}\\
ua_{31} &ua_{32} &ua_{33}
\end{pmatrix}
=
\begin{pmatrix}
e_l^sa_{11} &e_l^ta_{12} &a_{13}\\
e_l^sa_{21} &e_l^ta_{22} &a_{23}\\
e_l^sa_{31} &e_l^ta_{32} &a_{33}
\end{pmatrix}$
for $u\in \mathbb C^{\ast}$.
If $a_{ii}\not=0$ for some $1\leq i\leq 3$, then $u=1$.
Since $e_l^s\not=e_l^t$, $e_l^s\not=1$, and $e_l^t\not=1$, we get that $a_{ij}=0$ for $i\not=j$, i.e. $A$ is a diagonal matrix.
Since ord$(h)$ divides $l$, and $g$ and $h$ are defined by diagonal matrices, 
we get that $\langle g,h\rangle$ contains an automoprhism $k$ such that a representation matrix of $k$ is conjugate to 
$
\begin{pmatrix}
\alpha &0 &0\\
0 &\alpha &0\\
0 & 0 &\beta
\end{pmatrix}$ where $\alpha,\beta\in\mathbb C^{\ast}$.
This contradicts the assumption for $G$.
Therefore, $a_{ii}=0$ for any $i=1,2,3$.
Since $A$ is invertible, $a_{12}\not=0$ or $a_{13}\not=0$.
We assume that $a_{12}a_{13}\not=0$.
Then $ue_l^s=e_l^t$ and $ue_l^s=1$, and hence we get that $e_l^t=1$.
This contradicts the assumption that $e_l^t\not=1$.
Therefore, $a_{12}a_{13}=0$ and $(a_{12},a_{13})\not=(0,0)$.
In the same way, 
$a_{21}a_{23}=a_{31}a_{32}=0$, $(a_{21},a_{23})\not=(0,0)$, and $(a_{31},a_{32})\not=(0,0)$.
Since $A$ is invertible,
\[
A=
\begin{pmatrix}
0 &a_{12} &0\\
0 &0 &a_{23}\\
a_{31} &0 &0
\end{pmatrix}\  {\rm or}\   
\begin{pmatrix}
0 &0 &a_{13}\\
a_{21} &0 &0\\
0 &a_{32} &0
\end{pmatrix}.
\]
If $A$ is the former,
then $ue_l^s=e_l^t$, $ue_l^t=1$, and $u=e_l^s$.
Therefore, we get that ${e_l^s}^3={e_l^t}^3=u^3=1$.
In the same way, for the latter case, we get that ${e_l^s}^3={e_l^t}^3=u^3=1$.
Therefore, we may assume that
$ 
g=\begin{pmatrix}
e_3^2 &0 &0\\
0 & e_3 &0\\
0 & 0 &1
\end{pmatrix}$, and for an automorphism $k\in G\backslash\langle g\rangle$, 
$k$ is defined by a matrix of the form:
\[
\begin{pmatrix}
	0 &b_{12} &0\\
	0 &0 &b_{23}\\
	b_{31} &0 &0
\end{pmatrix}\  {\rm or}\   
\begin{pmatrix}
	0 &0 &b_{13}\\
	b_{21} &0 &0\\
	0 &b_{32} &0
\end{pmatrix}.
\]
Note that the square of the former (resp. latter) form of the matrix is of the latter (resp. former) form of the matrix.
From here, we show that $G\cong\mathbb Z/3\mathbb Z^{\oplus 3}$, and the degree $d$ of $X$ is a multiple of $3$.
We assume that there are two automorphisms $h_1,h_2\in G$ such that 
\[ h_1=
	\begin{pmatrix}
	0 &a &0\\
	0 &0 &b\\
	c & 0 &0
	\end{pmatrix},\ \
h_2=
	\begin{pmatrix}
	0 &a' &0\\
	0 &0 &b'\\
	c' &0 &0
	\end{pmatrix},\]
and $h_1\not\in\langle h_2\rangle$.
Then 
\[
h_1^2\circ h_2=\begin{pmatrix}
	0 &a &0\\
	0 &0 &b\\
	c & 0 &0
	\end{pmatrix}
	\begin{pmatrix}
	0 &a &0\\
	0 &0 &b\\
	c & 0 &0
	\end{pmatrix}
	\begin{pmatrix}
	0 &a' &0\\
	0 &0 &b'\\
	c' &0 &0
	\end{pmatrix}=
	\begin{pmatrix}
	abc' &0 &0\\
	0 &a'bc &0\\
	0 & 0 &abc'
	\end{pmatrix}.
\]
Since $G$ is abelian, and ord$(h_i)=3$ for $i=1,2$, we get that ord$(h_1^2\circ h_2)=3$.
Since ord$(g)=3$, and the assumption for $G$, we get that $h_1^2\circ h_2\in \langle g\rangle$. 
Therefore, $G=\langle g, h\rangle\cong \mathbb Z/3\mathbb Z^{\oplus 3}$ where 
\[
g=\begin{pmatrix}
e_3^2 &0 &0\\
0 & e_3 &0\\
0 & 0 &1
\end{pmatrix}\   {\rm and}\   
h=
\begin{pmatrix}
0 &a &0\\
0 &0 &b\\
c & 0 &0
\end{pmatrix}.
\]
Since $h([1:0:0])=[0:0:1]$ and $h^2([1:0:0])=[0:1:0]$,
if $\{[1:0:0],[0:1:0],[0:0:1]\}\cap X\not=\emptyset$, 
then $\{[1:0:0],[0:1:0],[0:0:1]\}\subset X$, i.e. $n(g)=3$.
By Table 1 and a representation matrix of $g$, we get that $3$ divides $d$.
Then $G$ is a subgroup of $\mathbb Z/d\mathbb Z^{\oplus 2}$.
We assume that $\{[1:0:0],[0:1:0],[0:0:1]\}\cap X=\emptyset$. 
By Table 1 and a representation matrix of $g$, 
we get that ord$(g)=3$ divides $d^2-3d+3$, and hence $3$ divides $d$.
Therefore, $G$ is a subgroup of $\mathbb Z/d\mathbb Z^{\oplus 2}$.
\end{proof}
\section{Proof of main theorems}
First, we will show Theorem \ref{thm:2} (Theorem \ref{thm:9}).
Theorem \ref{thm:2} is immediately followed by Theorems \ref{thm:8} and \ref{thm:20}.
\begin{thm}\label{thm:9}
Let $X$ be a smooth plane curve degree $d\geq 4$, and $g$ be an automorphism of $X$.\\
$(1)$ If ${\rm ord}(g)=d-1$ and $\sharp|{\rm Fix}(g)|\not=2$, then $X$ has an inner Galois point $p$, and $g$ is an automorphism belonging to the Galois point $p$.\\
$(2)$ If ${\rm ord}(g)=d$ and ${\rm Fix}(g)\not=\emptyset$, then $X$ has an outer Galois point $p$, and $g$ is an automorphism belonging to the Galois point $p$.
\end{thm}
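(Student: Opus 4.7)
The plan is to combine Theorem \ref{thm:20} with Theorem \ref{thm:8}. After a coordinate change I may assume $g$ is diagonal and appears in one of the eight rows of Table 1. The key observation is a simple dichotomy: a diagonal $3\times 3$ matrix either has three distinct eigenvalues, in which case ${\rm Fix}(g)\subset\mathbb{P}^2$ is exactly the three coordinate points and $|{\rm Fix}(g)\cap X|=n(g)$, or it has at most two distinct eigenvalues, in which case it is conjugate to ${\rm diag}(a,bI_2)$ and Theorem \ref{thm:8} applies directly.

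For part (1), I would enumerate the rows of Table 1 whose divisibility condition allows $l=d-1$. For $d\geq 4$, rows 1, 6, 8 are ruled out (they would force $d-1\mid 1$). In rows 2, 3, 5, and 7, substituting $l=d-1$ reduces each exponent $1-d$ (or $d-1$) to $0$ modulo $d-1$, so at most two distinct diagonal entries remain; the matrix is then conjugate to ${\rm diag}(e_{d-1},1,1)$, and Theorem \ref{thm:8} produces an inner Galois point $p$ to which $g$ belongs. The only remaining possibility is row 4, where the three entries may genuinely differ and $n(g)=2$; there $|{\rm Fix}(g)|=n(g)=2$. Hence $|{\rm Fix}(g)|\neq 2$ excludes row 4 and leaves only the Galois case.

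For part (2) I perform the parallel enumeration with $l=d$. Rows 2, 4, 5, 7, 8 are excluded since $d\nmid d-1$, $d\nmid(d-1)^2$, and $d\nmid d^2-3d+3$ for $d\geq 4$. Rows 3 and 6 both reduce to ${\rm diag}(e_d,e_d,1)$ because $e_d^{1-d}=e_d$; this is of the Galois form, so Theorem \ref{thm:8} yields an outer Galois point to which $g$ belongs. Only row 1 can retain three genuinely distinct diagonal entries, and there $n(g)=0$, so ${\rm Fix}(g)=\emptyset$. Consequently ${\rm Fix}(g)\neq\emptyset$ rules out the three-eigenvalue row 1 and forces the Galois form.

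The main obstacle is purely the row-by-row bookkeeping in Table 1: verifying that the divisibility constraint combined with the prescribed order $l=d-1$ (resp.\ $l=d$) forces either a collapse to at most two distinct eigenvalues (so Theorem \ref{thm:8} applies) or the specific value of $n(g)$ recorded in the table (so the fixed-point count on $X$ is determined). Once these routine checks are done, the claimed equivalences fall out of Theorems \ref{thm:8} and \ref{thm:20}.
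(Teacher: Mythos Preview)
Your proposal is correct and follows essentially the same route as the paper. Both arguments diagonalize $g$, invoke Table~1 (Theorem~\ref{thm:20}) to restrict the admissible rows for $l=d-1$ (resp.\ $l=d$), observe that all rows except no.~4 (resp.\ no.~1) collapse to at most two distinct eigenvalues so that Theorem~\ref{thm:8} applies directly, and then use the value of $n(g)$ in the remaining row together with the fixed-point hypothesis to exclude the three-distinct-eigenvalue case; your explicit ``three distinct eigenvalues vs.\ at most two'' dichotomy is precisely the mechanism underlying the paper's proof, only stated more cleanly.
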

\begin{proof}
Since $d\geq4$, ${\rm Aut}(X)$ is a subgroup of ${\rm PGL}(3,\mathbb C)$.
We will show (1) of this theorem.
Since ${\rm ord}(g)=d-1$, by replacing the local coordinate system if necessary, we may assume that $g$ is defined by a diagonal matrix $A$ such that $A$ is one of no.2, 3, 4, 5, and 7 of Table 1.
By Theorem \ref{thm:8}, if $A$ is one of no.2, 3, 5, and 7 of Table 1, then $X$ has an inner Galois point $p$, and $g$ is an automorphism belonging to the Galois point $p$.
We assume that $A$ is no.4 of Table 1, i.e. $A=
\begin{pmatrix}
e_{d-1}^s &0 &0\\
0 & e_{d-1}^t &0\\
0 & 0 &1
\end{pmatrix}$ where $1\leq s,t<d-1$.
Then $X\cap \{[1:0:0],[0:1:0],[0:0:1]\}\subset{\rm Fix}(g)$ and $\sharp|X\cap \{[1:0:0],[0:1:0],[0:0:1]\}|=2$.
Since $\sharp|{\rm Fix}(g)|\not=2$, $\sharp|{\rm Fix}(g)|\geq3$.
Then we get that $s=t$, $s=1$, or $t=1$.
By Theorem \ref{thm:8}, $X$ has an inner Galois point $p$, and $g$ is an automorphism belonging to the Galois point $p$.
In the same way, we get (2) of this theorem.
\end{proof}
Let $X$ be a smooth hypersurface of degree $d\geq 4$ in $\mathbb P^{n+1}$, $p$ be a point in $\mathbb P^{n+1}$.
Recall that $\pi_p:X\dashrightarrow H$ is a projection with center $p$ where $H$ is a hyperplane not containing $p$. 

The following result is obtained for an inner Galois point ([\ref{bio:20}]).
\begin{thm}\label{cro:0}([\ref{bio:20}]).
Let $X$ be a smooth plane curve degree $d\geq 4$, and $\mathbb C(X)$ be the function field of $X$, and $k\subset \mathbb C(X)$ be a subfield such that $k$ is isomorphic to $\mathbb C(\mathbb P^1)$ as a field.
If $\mathbb C(X)/k$ is a Galois extension of degree $d-1$, then $X$ has an inner Galois point $p$, and the Galois extension $\mathbb C(X)/k$ is induced by $\pi_p:X\rightarrow \mathbb P^1$, i.e. $k=\pi_p^{\ast}(\mathbb C(\mathbb P^1))$.
\end{thm}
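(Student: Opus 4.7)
The plan is to translate the algebraic hypothesis into a geometric statement about morphisms to $\mathbb P^1$, and then identify the resulting pencil by invoking the classical classification of low-degree pencils on smooth plane curves.

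First, the inclusion $k\hookrightarrow \mathbb C(X)$ with $k\cong \mathbb C(\mathbb P^1)$ corresponds, via the equivalence between smooth projective curves and their function fields, to a nonconstant morphism $\phi:X\to \mathbb P^1$ with $\phi^{\ast}\mathbb C(\mathbb P^1)=k$, of degree $[\mathbb C(X):k]=d-1$. Pulling back $\mathcal O_{\mathbb P^1}(1)$ produces a complete, base-point-free pencil on $X$ of degree $d-1$, namely a $g^{1}_{d-1}$.

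The crux of the argument is to show that every such pencil is cut out by projection from a point of $X$. Here I would appeal to the classical result that a smooth plane curve of degree $d\geq 4$ has gonality $d-1$ and that every complete base-point-free $g^{1}_{d-1}$ on such a curve is of the form $|H-p|$ for a unique point $p\in X$, where $H$ denotes the hyperplane class. Granted this, $\phi$ coincides with $\pi_p$ up to an automorphism of $\mathbb P^1$, and consequently $k=\pi_p^{\ast}\mathbb C(\mathbb P^1)=K_p$. Because $\mathbb C(X)/K_p$ is Galois by hypothesis, $p$ is by definition an inner Galois point of $X$, and the given extension is induced by $\pi_p$.

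The principal obstacle is the pencil-classification step. For $d=4$ it drops out of Riemann-Roch almost immediately: the canonical class coincides with $H$, so a divisor $D$ with $\deg D=d-1=3$ and $h^{0}(D)\geq 2$ satisfies $\deg(K-D)=1$ and $h^{0}(K-D)\geq 1$, producing a unique point $p\in X$ with $D+p\sim H$ and hence $|D|=|H-p|$. For $d\geq 5$ the statement is substantially deeper, and I would either cite Yoshihara's original treatment in [\ref{bio:20}] or derive it from Max Noether's uniqueness of the $g^{2}_{d}$ on a smooth plane curve of degree $d\geq 4$ together with a Castelnuovo-type bound on pencils. Once this geometric input is accepted, the remainder of the argument is purely formal.
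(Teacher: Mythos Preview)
The paper does not supply a proof of this theorem; it is quoted verbatim from Yoshihara~[\ref{bio:20}] and used as a black box, so there is no in-paper argument to compare against.

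Your outline is correct and is essentially the standard route to the result: realize the degree-$(d-1)$ subfield as a morphism $\phi:X\to\mathbb P^1$, identify the resulting $g^1_{d-1}$ with $|H-p|$ for a unique $p\in X$ via the Noether gonality theorem for smooth plane curves of degree $d\geq 4$, and conclude $k=K_p$, whence $p$ is an inner Galois point by definition. Two minor remarks. First, completeness of the pulled-back pencil is not immediate from the morphism alone; it follows a posteriori from $\dim|H-p|=1$, and in any case the classification step only requires the existence of a $g^1_{d-1}$, not its completeness. Second, the hard input you flag---that every $g^1_{d-1}$ on such a curve is a projection pencil---is precisely what the paper invokes in its introduction (references~[\ref{bio:1000}] and~[\ref{bio:1001}]) when asserting that $K_p$ is a maximal rational subfield, so citing it here is entirely consistent with the paper's framework.
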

In the case of the outer Galois point, by Example \ref{ex1}, we see that the same result as in Theorem \ref{cro:0} does not hold.
\begin{exa}\label{ex1}
Let $X$ be a smooth curve of degree $4$ in $\mathbb P^2$ defined by 
\[X_0^4+X_1^4+X_2^4=0\]
which is called the Fermat curve of degree $4$.
The $X$ has two automorphism $g_1$ and $g_2$ of order $2$ such that 
the followimg matrices $A_1$ and $A_2$ are representation matrices of $g_1$ and $g_2$, respectively
\[
A_1:=
\begin{pmatrix}
-1&0&0\\
0&1&0\\
0&0&1
\end{pmatrix}\ {\rm and}\ 
A_2:=
\begin{pmatrix}
1&0&0\\
0&-1&0\\
0&0&1
\end{pmatrix}.
\]
Let $G$ be the subgroup of Aut$(X)$ generated by $g_1$ and $g_2$, and $g_3:=g_1\circ g_2\in G$.
Then $G\cong \mathbb Z/2\mathbb Z^{\oplus 2}$, and $\sharp|{\rm Fix}(g_i)|=4$ for $i=1,2,3$.

Let $G_x:=\{g\in G:\,g(x)=x\}$.
For a smooth curve $C$, we write the genus of $C$ as $g(C)$.
By the Riemann-Hurwitz formula, 
\[ 2-2g(X)+\sum_{x\in X}(\sharp|G_x|-1)=\sharp|G|(2-2g(X/G)).\]
Since $X$ is a smooth plane curve of degree $4$, we get that $2-2g(X)=4(3-4)=-4$.
Then 
\[ 2-2g(X)+\sum_{x\in X}(\sharp|G_x|-1)=-4+12=8.\]
Since $\sharp|G|=4$, and the Riemann-Hurwitz formula, we get that $g(X/G)=0$, and hence $X/G\cong \mathbb P^1$.
Let $p:X\rightarrow X/G$ be the quotient morphism.
Since $G$ is not cyclic group, the Galois extension $\mathbb C(X)/p^{\ast}\mathbb C(\mathbb P^1)$ is not induced by a Galois point of $X$.
\end{exa}
The following theorem shows that similar results hold for an outer Galois point under the assumption of a cyclic extension.
\begin{thm}\label{cro:1}
Let $X$ be a smooth plane curve degree $d\geq 4$, and $\mathbb C(X)$ be the function field of $X$, and $k\subset \mathbb C(X)$ be a subfield such that $k$ is isomorphic to $\mathbb C(\mathbb P^1)$ as a field.
If $\mathbb C(X)/k$ is a cyclic extension of degree $d$, then $X$ has an outer Galois point $p$, and the cyclic extension $\mathbb C(X)/k$ is induced by $\pi_p:X\rightarrow \mathbb P^1$, i.e. $k=\pi_p^{\ast}(\mathbb C(\mathbb P^1))$.
\end{thm}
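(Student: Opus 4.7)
The plan is to reduce the statement to Theorem~\ref{thm:9}(2): once I produce a linear automorphism $g\in\mathrm{Aut}(X)$ of order $d$ that generates $\mathrm{Gal}(\mathbb{C}(X)/k)$ and satisfies $\mathrm{Fix}(g)\neq\emptyset$, Theorem~\ref{thm:9}(2) supplies an outer Galois point $p$ to which $g$ belongs, and the identification $k=\pi_p^{\ast}(\mathbb{C}(\mathbb{P}^1))$ follows from the Galois correspondence because both fields are the $\langle g\rangle$-fixed subfield of $\mathbb{C}(X)$. Since $d\geq 4$ gives $\mathrm{Aut}(X)\subset\mathrm{PGL}(3,\mathbb{C})$, the generator $g$ of the cyclic Galois group is automatically linear; after a coordinate change I may take a representation matrix of $g$ to be diagonal, $\mathrm{diag}(\alpha,\beta,1)$ with $\alpha,\beta$ roots of unity. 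The assumption $k\cong\mathbb{C}(\mathbb{P}^1)$ gives $X/\langle g\rangle\cong\mathbb{P}^1$.

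The heart of the argument is to show $\mathrm{Fix}(g)\neq\emptyset$. If any two of $\alpha,\beta,1$ coincide, then $g$ pointwise fixes a line of $\mathbb{P}^2$, which meets the degree-$d$ curve $X$ nontrivially by B\'ezout, and we are done. Suppose instead that $\alpha,\beta,1$ are pairwise distinct, so $\mathrm{Fix}(g)\subset\mathbb{P}^2$ consists exactly of the three coordinate points. I will assume none of these lies on $X$ and derive a contradiction from the Riemann--Hurwitz formula applied to the cyclic cover $X\to X/\langle g\rangle\cong\mathbb{P}^1$, namely $\sum_{m=1}^{d-1}|\mathrm{Fix}(g^m)\cap X|=d(d-1)$.

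To bound the sum, write $\alpha=e_d^s$, $\beta=e_d^t$ and set $u=\gcd(s,d)$, $v=\gcd(t,d)$, $w=\gcd(s-t,d)$. The order-$d$ condition $\gcd(s,t,d)=1$ makes $u,v,w$ pairwise coprime divisors of $d$, each strictly less than $d$; the values of $m\in\{1,\ldots,d-1\}$ at which $g^m$ has a repeated eigenvalue are exactly those divisible by $d/u$, $d/v$, or $d/w$, and pairwise coprimality renders these three conditions mutually exclusive, so their count is $u+v+w-3$. For each such $m$, $\mathrm{Fix}(g^m)\cap X$ lies in the intersection of a line with $X$ (the isolated fixed point being a coordinate point, hence off $X$ by assumption), so contributes at most $d$; for the remaining $m$ the set is empty. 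Hence $d(d-1)\leq d(u+v+w-3)$, i.e.\ $u+v+w\geq d+2$. On the other hand, pairwise coprimality gives $uvw\mid d$, and the elementary inequality $u+v+w\leq uvw+2$ combined with $uvw\leq d$ and a short case analysis using $u,v,w<d$ yields $u+v+w\leq d$ for $d\geq 4$, the desired contradiction.

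With $\mathrm{Fix}(g)\neq\emptyset$ established, Theorem~\ref{thm:9}(2) provides the outer Galois point $p$, and the identification $k=\pi_p^{\ast}(\mathbb{C}(\mathbb{P}^1))$ follows as indicated. The main obstacle is the Riemann--Hurwitz bookkeeping in the three-distinct-eigenvalue subcase, specifically the arithmetic bound $u+v+w\leq d$ extracted from the pairwise coprimality of $u,v,w$ and the divisibility $uvw\mid d$.
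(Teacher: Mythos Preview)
Your proposal is correct and follows essentially the same route as the paper's own proof. Both arguments reduce to showing $\mathrm{Fix}(g)\neq\emptyset$ and then invoke Theorem~\ref{thm:9}(2); both handle the three-distinct-eigenvalue case by applying Riemann--Hurwitz to the cyclic cover $X\to X/\langle g\rangle\cong\mathbb{P}^1$ and analysing, for each nontrivial power $g^m$, whether it fixes one of the three coordinate lines. Your integers $u,v,w$ are exactly the orders of the paper's subgroups $G_1,G_2,G_3$ (up to relabelling), and your mutual-exclusivity observation is the paper's statement $G_i\cap G_j=\{\mathrm{id}\}$. The only cosmetic difference is the endgame arithmetic: the paper uses $bc\mid d$ (two of the three coprimality relations) together with the ordering $a\leq b\leq c$ to force $b\leq 2$ and then rule out $b=1,2$ separately, whereas you use the stronger $uvw\mid d$ (all three relations at once, which does follow from pairwise coprimality of divisors of $d$) together with $u+v+w\leq uvw+2$. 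Both reach the same contradiction with $u+v+w\geq d+2$; your packaging is slightly slicker but not materially different.
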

\begin{proof}
Since $X$ is a smooth curve, there is a cyclic subgroup $G$ of ${\rm Aut}(X)$ such that $X/G\cong \mathbb P^1$, and $k=p^{\ast}\mathbb C(\mathbb P^1)$ where $p:X\rightarrow X/G$ be the quotient morphism.
Since $d\geq4$, $G$ is a subgroup of ${\rm PGL}(3,\mathbb C)$.
Let $g$ be a generator of $G$.
By replacing the local coordinate system if necessary, we assume that there is a  diagonal matrix $A$ such that $A$ is a representation matrix of $g$.
Since ord$(g)=d$ and Theorem \ref{thm:9}, we only show that Fix$(g)\not=\emptyset$.

We assume that Fix$(g)=\emptyset$.
By Theorem \ref{thm:8}, that is, by the no.1 of Table 1, we may assume  that
$A=
\begin{pmatrix}
	e_d^s &0 &0\\
	0 & e_d^t &0\\
	0 & 0 &1
\end{pmatrix}$.
Since Fix$(g)=\emptyset$, 
$X\cap \{[1:0:0],[0:1:0],[0:0:1]\}=\emptyset$.
Then if Fix$(g^i)\not=\emptyset$ for some $1<i<d$, then $\sharp|{\rm Fix}(g^i)|=d$.
By the Riemann-Hurwitz formula and $C/G\cong \mathbb P^1$,  
\[ 2-2g(X)+\sum_{x\in X}(\sharp|G_x|-1)=2\sharp|G|=2d\]
Since $X$ is a smooth plane curve of degree $d$, we get that $2-2g(X)=d(3-d)$, and hence
By the matrix $A$, we get that ${\rm Fix}(g^i)\backslash \{[1:0:0],[0:1:0]\}\not=\emptyset$ if and only if $(e_{d-1}^{si}-e_{d-1}^{ti})(e_{d-1}^{si}-1)(e_{d-1}^{ti}-1)=0$ for $1< i<d$.
We define subgroups $G_1$, $G_2$, and $G_3$ of $G$ as follows:
\[ G_1:=\{g\in G\,|\,{\rm a\ representation\ matrix\ of}\ g\ {\rm is}\ \begin{pmatrix}
\alpha &0 &0\\
0 &1 &0\\
0 &0 &1
\end{pmatrix}\ {\rm for\ some}\ \alpha\in\mathbb C^{\ast}
\}.\]
\[ G_2:=\{g\in G\,|\,{\rm a\ representation\ matrix\ of}\ g\ {\rm is}\ \begin{pmatrix}
1 &0 &0\\
0 &\beta &0\\
0 &0 &1
\end{pmatrix}\ {\rm for\ some}\ \beta\in\mathbb C^{\ast}
\}.\]
\[G_3:=\{g\in G\,|\,{\rm a\ representation\ matrix\ of}\ g\ {\rm is}\ \begin{pmatrix}
\gamma &0 &0\\
0 &\gamma &0\\
0 &0 &1
\end{pmatrix}\ {\rm for\ some }\ \gamma\in\mathbb C^{\ast}
\}.\]
We set $a:=\sharp|G_1|$, $b:=\sharp|G_2|$, and $c:=\sharp|G_3|$.
Then $G_i\cap G_j=\{{\rm id}_X\}$ for $1\leq i<j\leq 3$, and ${\rm Fix}(g^i)\not=\emptyset$ if and only if $g^i\in \bigcup_{j=1}^3G_j$ for $1<i<d$.
Then
\[ (d-1)d=\sum_{x\in C}(\sharp|G_x|-1)=d(a+b+c-3).\] 
Therefore, 
\[ d+2=a+b+c.\] 
For simplicity, we may assume that $a\leq b\leq c$.
Since $d+2=a+b+c$, $1<c$.
Since $G_2\cap G_3=\{{\rm id}_X\}$ and $\sharp|G|=d$, we get that $bc|d$.
By the equation $d+2=a+b+c$, we get that $bc+2\leq a+b+c\leq b+2c$, and hence $(b-2)(c-1)\leq 0$.
Since $1<c$, $b\leq 2$.
If $b=2$, then by the equation $bc+2\leq a+b+c$, we get that $a=b=c$.
Since $G_i\cap G_i=\{{\rm id}_X\}$ for $1\leq i<j\leq3$, we get that $\mathbb Z_2^{\oplus 2}\cong \langle G_i,G_j\rangle\subset G$ where $1\leq i<j\leq3$, and
 $\langle G_i,G_j\rangle$ is the subgroup of $G$ generated by $G_i$ and $G_j$.
This contradicts that $G$ is a cyclic group.
If $b=1$, then $a=1$ and $c=d$.
This implies that $G=\langle g\rangle =G_3$. 
This contradicts that $G=\langle g\rangle$ and Fix$(g)=\emptyset$.
Therefore,  Fix$(g)\not=\emptyset$. 
By Theorem \ref{thm:9},  $X$ has an outer Galois point $p$, and $g$ is an automorphism belonging to the Galois point $p$.
\end{proof}
From here, we will study $X\subset \mathbb P^{n+1}$ for $n\geq 2$.
First, we give Examples \ref{exa:4} and \ref{exa:5} which imply that Corollary \ref{cro:1} does not hold for $n=2$.
\begin{exa}\label{exa:4}
Let $X$ be a smooth surface of degree $4$ in $\mathbb P^3$ defined by 
\[X_0^3X_2+X_1^3X_3+X_2^4+X_3^4=0.\]
The surface $X$ has an automorphism $g$ of order $3$ such that
\[g=
\begin{pmatrix}
e_3&0&0&0\\
0&e_3^2&0&0\\
0&0&1&0\\
0&0&0&1
\end{pmatrix}.\]
Then Fix$(g)$ contains a smooth rational curve.
Since the degree of $X$ is $4$, $X$ is a $K3$ surface.
Since Fix$(g)$ contains a curve, $g$ is a non-symplectic automorphism of order $3$.
Then the quotient space $Y:=X/\langle g \rangle$ is rational.
Let $q:X\rightarrow Y$ be the quotient morphism.
Since $Y$ is rational $k:=q^{\ast}\mathbb C(Y)\cong \mathbb C(\mathbb P^2)$ as a field.
However, by Theorem \ref{thm:8}, there is no a Galois point $p$ of $X$ such that $g$ is an automorphism belonging to the Galois point $p$.
In other words, there is no a Galois point $p$ of $X$ such that $k=\pi_p^{\ast}\mathbb C(\mathbb P^2)$.
Pay attention that $X$ has Galois points, and $\delta(X)=8$ ([\ref{bio:21}]).
\end{exa}
\begin{exa}\label{exa:5}
Let $X$ be a smooth surface in $\mathbb P^3$ defined by 
\[X_0^6+X_1^6+X_2^6+X_3^6+X_0^2X_1^3X_2+X_2^3X_3^3=0.\]
The surface $X$ has an automorphism $g$ of order $6$ such that
\[g=
\begin{pmatrix}
-1&0&0&0\\
0&e_3&0&0\\
0&0&1&0\\
0&0&0&1
\end{pmatrix}.\]
Fix$(g^3)=\{X_0=0\}\cap X:=H_1$ and Fix$(g^2)=\{X_1=0\}\cap X:=H_2$ are smooth curves, and Fix$(g)=H_1\cap H_2$.
Then the quotient space $Y:=X/\langle g\rangle$ is smooth.
Let $p:X\rightarrow Y$ be the quotient morphism, and $\mathcal O_X(1):=\mathcal O_{\mathbb P^{3}}(1)$ be the ample line bundle.
By the ramification formula, $K_X=p^{\ast}K_Y+H_1+2H_2$, and hence $p^{\ast}K_Y=K_X-H_1-2H_2$.
Since $K_X=\mathcal O_X(2)$, and $\mathcal O_X(H_i)=\mathcal O_X(1)$ for $i=1,2$,
we get that $p^{\ast}\mathcal O_Y(-K_Y)=\mathcal O_X(1)$ is ample.
Since the morphism $p:X\rightarrow Y$ is finite, $-K_Y$ is ample.
Since $Y$ is a smooth surface, $Y$ is rational, and hence $k:=q^{\ast}\mathbb C(Y)\cong \mathbb C(\mathbb P^2)$ as a field.
However, by Theorem \ref{thm:8}, there is no a Galois point $p$ of $X$ such that $k=\pi_p^{\ast}\mathbb C(\mathbb P^2)$.
\end{exa}
We will show  Theorems \ref{thm:3} and \ref{thm:4} (Theorem \ref{thm:10}).
Recall that for a smooth hypersurface $X\subset\mathbb P^{n+1}$ of degree $d\geq 4$, if $(n,d)\not=(2,4)$, then all automorphisms of $X$ are linear.
\begin{thm}\label{thm:10}
Let $X$ be a smooth hypersurface of degree $d\geq 4$ in $\mathbb P^{n+1}$, and $g$ be a linear automorphism of $X$.\\
(1) If $n=2$, ${\rm ord}(g)=d-1$, and ${\rm Fix}(g)$ contains a curve $C'$ which is not a smooth rational curve, 
then $X$ has an inner Galois point $p$, and $g$ is an automorphism belonging to the Galois point $p$.\\
(2) If $n\geq 3$, ${\rm ord}(g)=d-1$, and ${\rm Fix}(g)$ has codimension $1$ in $X$, then $X$ has an inner Galois point $p$, and $g$ is an automorphism belonging to the Galois point $p$.\\
(3) If $n\geq 2$, ${\rm ord}(g)=d$, and ${\rm Fix}(g)$ has codimension $1$ in $X$, then $X$ has an outer Galois point $p$, and $g$ is an automorphism belonging to the Galois point $p$.
\end{thm}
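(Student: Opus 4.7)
The plan is to reduce all three parts to Theorem~\ref{thm:8}: in each case it suffices to show that a representation matrix $A$ of $g$ is conjugate to $\begin{pmatrix} a & 0 \\ 0 & bI_{n+1} \end{pmatrix}$ with $a/b$ a primitive $(d-1)$-th root of unity in parts (1) and (2), and a primitive $d$-th root of unity in part (3). Since $g$ has finite order, $A$ is diagonalizable; after a coordinate change we may assume $A={\rm diag}(\lambda_0,\ldots,\lambda_{n+1})$, and then ${\rm Fix}(g)\subset\mathbb{P}^{n+1}$ is the disjoint union $\bigsqcup_\mu\mathbb{P}(V_\mu)$ of the projectivized eigenspaces. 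For ${\rm Fix}(g)\cap X$ to contain a component of dimension $n-1$, one of two cases must hold: \emph{(A)} some eigenspace has $\dim V_\mu=n+1$, so $\mathbb{P}(V_\mu)$ is a hyperplane meeting $X$ in a hypersurface of dimension $n-1$; or \emph{(B)} some eigenspace has $\dim V_\mu=n$ and $\mathbb{P}(V_\mu)\cong\mathbb{P}^{n-1}$ is contained in $X$. In Case (A), $A$ has exactly two distinct eigenvalues with multiplicities $1$ and $n+1$, giving $A={\rm diag}(a,b,\ldots,b)$; the order hypothesis forces $a/b$ to be a primitive root of the required order, and Theorem~\ref{thm:8} concludes. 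The real content is to exclude Case (B).

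For part (1), Case (B) gives $\mathbb{P}(V_\mu)\cong\mathbb{P}^1$, which is a smooth rational curve; an inspection of the remaining eigenspaces (of total dimension two) shows that any further positive-dimensional fixed component is again a line, so every curve in ${\rm Fix}(g)$ is smooth rational, contradicting the hypothesis on $C'$. For parts (2) and (3) when $n\geq 3$, Case (B) forces $X$ to contain a codimension-two linear subspace, and after a coordinate change sending $\mathbb{P}(V_\mu)$ to $\{X_0=X_1=0\}$ the defining equation takes the form $F=X_0G+X_1H$ with $\deg G=\deg H=d-1$. A direct gradient computation shows that the singular locus of $X$ along $\{X_0=X_1=0\}$ equals the common zero locus in $\mathbb{P}^{n-1}$ of $G|_{X_0=X_1=0}$ and $H|_{X_0=X_1=0}$, which by the standard codimension bound has dimension at least $n-3\geq 0$ and is therefore nonempty, contradicting the smoothness of $X$.

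The main obstacle is the sub-case $n=2$ of part (3), where the dimension bound above degenerates to $-1$ and is inconclusive. Here $A={\rm diag}(a_1,a_2,b,b)$ with $\{X_0=X_1=0\}\subset X$ and ${\rm ord}(g)=d$. I would expand $F$ as a sum of pieces of bidegree $(i,d-i)$ in $(X_0,X_1)$ and $(X_2,X_3)$; the $g$-semi-invariance of $F$ pins down the admissible bidegrees, and $\{X_0=X_1=0\}\subset X$ kills the $(0,d)$-piece. In the sub-case $a_1=a_2$, smoothness along $\{X_0=X_1=0\}$ forces the unique admissible bidegree to be $(1,d-1)$, whence the complementary line $\{X_2=X_3=0\}$ is also contained in $X$ and entirely singular there for $d\geq 3$; in the sub-case $a_1\neq a_2$, semi-invariance prevents both $\partial F/\partial X_0$ and $\partial F/\partial X_1$ from being nonzero along $\{X_0=X_1=0\}$, so at least one vanishes identically there while the other is a degree-$(d-1)$ form on $\mathbb{P}^1$ with unavoidable zeros, producing a singular point of $X$ for $d\geq 2$. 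Both sub-cases contradict smoothness, ruling out Case (B), and Theorem~\ref{thm:8} completes the proof.
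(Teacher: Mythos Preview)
Your proof is correct and follows essentially the same approach as the paper's: diagonalize $g$, split into the case of an $(n+1)$-dimensional eigenspace (Case~A, handled by Theorem~\ref{thm:8}) versus an $n$-dimensional eigenspace whose projectivization lies in $X$ (Case~B), and rule out Case~B by the smoothness of $X$ via the linear-term argument on $F=X_0G+X_1H+\cdots$. The only organizational difference is in the $n=2$ sub-case of part~(3): the paper first observes that smoothness along $\{X_0=X_1=0\}$ forces both $F_{1,0}$ and $F_{0,1}$ to be nonzero (else one of them, a degree-$(d-1)$ form on $\mathbb P^1$, would have a zero giving a singular point), hence $a_1=a_2$ by semi-invariance, and then derives the contradiction from $F=F_{1,0}X_0+F_{0,1}X_1$; you instead split into $a_1=a_2$ and $a_1\neq a_2$ and reach the same contradictions. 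These are the same argument in slightly different order.
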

\begin{proof}
By replacing the local coordinate system if necessary, we may assume that
\[g=
\begin{pmatrix}
a_{i_1}I_{i_1} & &\\
&\ddots &\\
 &  &a_{i_m}I_{i_m}
\end{pmatrix}\]
where $I_{i_j}$ is the identity matrix of size $i_j$,  $a_{i_j}\in\mathbb C^{\ast}$, $a_{i_j}\not=a_{i_k}$ for $1\leq i_j,i_k\leq m$, and $\sum_{j=1}^mi_j=n+2$.
We assume that Fix$(g)$ contains a hypersurface $H$ in $X$.
Since dim\,$H=n-1$, $i_j\geq n-1$ for some $1\leq j\leq m$.
Then we may assume that 
\[g=
\begin{pmatrix}
a &0 \\
0&I_{n+1}
\end{pmatrix}\  {\rm or}\  
\begin{pmatrix}
a &0&0 \\
0&b&0\\
0&0&I_{n}
\end{pmatrix}.\]
If $g$ is defined by the former matrix, then by Theorem \ref{thm:8} $X$ has a Galois point $p$, and $g$ is an automorphism belonging to the Galois point $p$.

From here, we will show that if $g$ is defined by the latter matrix, then $n=2$, ${\rm ord}(g)=d-1$, and curves contained in Fix$(g)$ are $\mathbb p^1$.
By the representation matrix of $g$, $H=\{X_0=0\}\cap \{X_1=0\}$.
Let $F(X_0,\ldots ,X_{n+2})$ be the defining equation of $X$.
Since $H=\{X_0=0\}\cap \{X_1=0\}$, 
\begin{equation*}
\begin{split}
F(X_0,\ldots ,X_{n+2})=&F_{1,0}(X_2,\ldots,X_{n+2})X_0+F_{0,1}(X_2,\ldots,X_{n+2})X_1\\
&\ \ +\sum_{2\leq i+j\leq d}F_{i,j}(X_2,\ldots,X_{n+2})X^i_0X^j_1.
\end{split}
\end{equation*}
Since $X$ is smooth, $\{F_{1,0}(X_2,\ldots,X_{n+2})=0\}\cap\{F_{0,1}(X_2,\ldots,X_{n+2})=0\}\cap\{X_0=0\}\cap\{X_1=0\}=\emptyset$.
Therefore, $n=2$, curves of Fix$(g)$ are $\mathbb p^1$, $F_{1,0}(X_2,\ldots,X_{n+2})\not=0$, and $F_{0,1}(X_2,\ldots,X_{n+2})\not=0$.
Then $a=b$.
If ${\rm ord}(g)=d$, then $a=b=e_d$.
Then the defining equation of $X$ is as follows. 
\[ F(X_0,\ldots ,X_{n+2})=F_{1,0}(X_2,\ldots,X_{n+2})X_0+F_{0,1}(X_2,\ldots,X_{n+2})X_1.\]
Points $[1:0:0:0]$ and $[0:1:0:0]$ are singular points of $X$.
This contradicts that $X$ is smooth.
Therefore, ${\rm ord}(g)=d-1$.
\end{proof}
In the same way, we get Theorem \ref{thm:5} (Theorem \ref{thm:11}). 
\begin{thm}\label{thm:11}
Let $X$ be a smooth hypersurface of degree $d$ in $\mathbb P^{n+1}$, $g\in{\rm Aut}(X)$ be a linear automorphism of order $k(d-1)$ for $k\geq 2$.\\
(1) If $n=2$ and $\sharp|{\rm Fix}(g)|\geq5$, then $X$ has an inner Galois point $p$, and $g^k$ is an automorphism belonging to the Galois point $p$.\\
(2) If $n\geq 3$, and the dimension of ${\rm Fix}(g)$ is $n-2$, then $X$ has an inner Galois point $p$, and $g^k$ is an automorphism belonging to the Galois point $p$.
\end{thm}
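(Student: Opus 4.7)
My plan is to set $h := g^k$ (of order $d-1$) and reduce the claim to applying Theorem \ref{thm:10} or Theorem \ref{thm:8} directly to $h$. The key observation is that ${\rm Fix}(g) \subseteq {\rm Fix}(h)$, and if $g$ is diagonalized as $g = {\rm diag}(a_1 I_{i_1}, \ldots, a_m I_{i_m})$ with the $a_j$ pairwise distinct, then $h = {\rm diag}(a_1^k I_{i_1}, \ldots, a_m^k I_{i_m})$ has the same eigenspace decomposition, except that eigenspaces of $g$ whose eigenvalues' $k$-th powers coincide are merged in $h$. This means the partition into blocks can only become \emph{coarser} when passing from $g$ to $h$, so I will work through the possible block partitions of $g$ allowed by the hypotheses and check in each case whether $h$ lands in the Galois form ${\rm diag}(\alpha, \beta I_{n+1})$ of Theorem \ref{thm:8}.

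For part (2) with $n \geq 3$: if $\dim {\rm Fix}(g) = n - 1$, then $\dim {\rm Fix}(h) \geq n - 1$, and since $h \neq {\rm id}$ we must have equality, so Theorem \ref{thm:10}(2) applied to $h$ finishes the proof. If $\dim {\rm Fix}(g) = n - 2$, the block analysis of Theorem \ref{thm:10} forces the largest block of $g$ to have size $\geq n$. The case $g = {\rm diag}(a, bI_{n+1})$ already puts $h$ in the shape of Theorem \ref{thm:8}; the two remaining shapes $g = {\rm diag}(aI_2, bI_n)$ and $g = {\rm diag}(a, b, cI_n)$ I rule out by adapting the smoothness computation from the proof of Theorem \ref{thm:10}. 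Namely, since ${\rm ord}(g) = k(d-1) > d$, $g$-invariance forces the defining polynomial $F$ of $X$ to be bi-homogeneous in a single bi-degree $(s, d-s)$; the boundary values $s \in \{0, d\}$ place a linear singular locus on $X$, while an interior value of $s$ forces the $\mathbb P^{n-1}$-eigenspace to be contained in $X$, which would make $\dim {\rm Fix}(g) = n - 1$ contrary to our case hypothesis.

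For part (1) with $n = 2$: the hypothesis $\sharp|{\rm Fix}(g)| \geq 5$ rules out the fully-distinct partition $(1,1,1,1)$ (which yields only four fixed points in $\mathbb P^3$), so some block has size $\geq 2$. I enumerate the remaining partitions $(3,1), (2,2), (2,1,1)$: the $(3,1)$ case puts $h$ directly in Galois form; the $(2,2)$ case contains two invariant lines in $X$ along which the same bi-degree smoothness analysis shows $X$ is singular; and the $(2,1,1)$ case splits further according to whether the $k$-th powers of the three distinct eigenvalues of $g$ coalesce---if they do, the block structure of $h$ collapses to $(3,1)$ and we are done, and if they do not, a refined monomial count combined with vanishing of partial derivatives at the two isolated fixed points $[0:0:1:0]$ and $[0:0:0:1]$ forces both of these points into the singular locus of $X$, a contradiction. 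Once $h$ is shown to be conjugate to ${\rm diag}(\alpha, \beta I_{n+1})$, the equality ${\rm ord}(h) = d-1$ forces $\alpha/\beta$ to be a primitive $(d-1)$-th root of unity, and Theorem \ref{thm:8} delivers the desired inner Galois point with $h = g^k$ as belonging automorphism. The main obstacle will be the residual $(2,1,1)$-case in part (1): the single-bi-degree shortcut from Theorem \ref{thm:10} does not apply, and one must track two independent characters $a/c$ and $b/c$ of orders dividing $k(d-1)$ simultaneously to corner $X$ into being singular at two specific coordinate points.
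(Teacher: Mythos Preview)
Your outline has a genuine gap in part~(2), in the shape $g=\mathrm{diag}(a,b,cI_n)$ with $a,b,c$ pairwise distinct. You claim that $g$-invariance forces $F$ to be bi-homogeneous of a single bi-degree $(s,d-s)$, but that argument only works for $\mathrm{diag}(aI_2,bI_n)$, where there is a single character $a/b$. With three eigenvalues (normalising $c=1$) a monomial $X_0^{e_0}X_1^{e_1}\cdots$ scales by $a^{e_0}b^{e_1}$, so semi-invariance constrains the pair $(e_0,e_1)$ through two independent characters and does \emph{not} fix $e_0+e_1$; your singularity/dimension dichotomy therefore collapses. More to the point, this shape cannot be ruled out at all: it is precisely the case in which the Galois point is produced. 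The paper (its ``Form~A'') handles it by the case split on $n(g):=\sharp\bigl(\{[1{:}0{:}\cdots{:}0],[0{:}1{:}0{:}\cdots{:}0]\}\cap X\bigr)$. Smoothness at these two coordinate points forces specific monomials such as $X_0^d$, $X_jX_1^{d-1}$, or $X_0X_1^{d-1}$ into $F$; the resulting relations among $a,b$ (for instance $a^d=1$ together with $b^{d-1}=1$), combined with $\mathrm{ord}(g)=k(d-1)$, show in every surviving subcase that $g^k$ collapses to $\mathrm{diag}(\alpha,\beta I_{n+1})$, while the remaining subcases contradict the order hypothesis.

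There is also a secondary omission: your claim that the largest block of $g$ has size $\ge n$ is unjustified. A block of size $n-1$ gives an eigenspace $\mathbb P^{n-2}$, and if that linear space happens to lie in $X$ one again gets $\dim\mathrm{Fix}(g)=n-2$. The paper treats this as a separate ``Form~B'' $\mathrm{diag}(a,b,c,I_{n-1})$ and eliminates it by observing that smoothness of $X$ along the contained $\mathbb P^{n-2}$ forces the three linear coefficients $F_0,F_1,F_2$ to be nonzero, whence their characters $a,b,c$ must agree, contradicting distinctness. Your part~(1) sketch for the $(2,1,1)$ partition is headed in the right direction---it is exactly the $n=2$ instance of Form~A---but the ``refined monomial count'' you gesture at is precisely this $n(g)$ case analysis, and it does not reduce to the bi-degree shortcut you describe for part~(2).
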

\begin{proof}
As like the proof of Theorem \ref{thm:10}, we may assume that
\[g=
\begin{pmatrix}
a&0&0 \\
0&b&0\\
0&0&I_{n}
\end{pmatrix}\   {\rm or}\   
\begin{pmatrix}
a&0&0&0\\
0&b&0&0\\
0&0&c&0\\
0&0&0&I_{n-1}
\end{pmatrix}\]
where $a$, $b$, $c$, and $1$ are different numbers from each other.

First, we will show that if $g$ is defined by the former matrix, then $X$ has an inner Galois point $p$, and 
$g^k$ is an automorphism belonging to the Galois point $p$.

Let $F(X_0,\ldots ,X_{n+2})$ be the defining equation of $X$.
Since dim${\rm Fix}(g)=n-2$, 
\[
F(X_0,\ldots ,X_{n+2})=\sum_{1\leq i+j\leq d}F_{i,j}(X_2,\ldots,X_{n+2})X^i_0X^j_1+G(X_2,\ldots,X_{n+2})\]
where $G(X_2,\ldots,X_{n+2})\not=0$.
Let $n(g):=\sharp|\{[1:0:\cdots:0],[0:1:0:\cdots:0]\}\cap X|$.

If $n(g)=0$, then $\sum_{1\leq i+j\leq d}F_{i,j}(X_2,\ldots,X_{n+2})X^i_0X^j_1$ has $X_0^d$ and $X_1^d$ terms.
Since $G(X_2,\ldots,X_{n+2})\not=0$, $a^d=b^d=1$.
This contradicts that ord$(g)>d$.

If $n(g)=1$, then we may assume that $\sum_{1\leq i+j\leq d}F_{i,j}(X_2,\ldots,X_{n+2})X^i_0X^j_1$ has (i) $X_0^d$ and $X_iX_1^{d-1}$ terms, or (ii) $X_0^d$ and $X_0X_1^{d-1}$ terms where $2\leq i\leq n+2$.
The case (i) implies that $a^d=b^{d-1}=1$.
By Theorem \ref{thm:8}, there is an inner Galois point $p$ of $X$, and $g^k$ is an automorphism belonging to the Galois point $p$.
The case (ii) implies that $a^d=ab^{d-1}=1$.
Same as above, $X$ has an inner Galois point $p$, and $g^k$ is an automorphism belonging to the Galois point $p$.

If $n(g)=2$, then we may assume that $\sum_{1\leq i+j\leq d}F_{i,j}(X_2,\ldots,X_{n+2})X^i_0X^j_1$ has (iii) $X_iX_0^d$ and $X_iX_1^{d-1}$ terms, (iv) $X_iX_0^d$ and $X_0X_1^{d-1}$,
or (v) $X_1X_0^{d-1}$ and $X_0X_1^{d-1}$ terms where $2\leq i,j\leq n+2$.
The case (iii) implies that $a^{d-1}=b^{d-1}=1$.
This contradicts that ord$(g)>d-1$.
As like the case $n(g)=1$, if the case is (iv), then
by Theorem \ref{thm:8}, there is an inner Galois point $p$ of $X$, and $g^k$ is an automorphism belonging to the Galois point $p$.
The case (v) implies that $a^{d-1}b=ab^{d-1}=1$.
Then ord$(g)$ divides $(d-2)d$.
This contradicts that ord$(g)=k(d-1)$.

From here, we study the latter case, i.e. $g=
\begin{pmatrix}
a&0&0&0\\
0&b&0&0\\
0&0&c&0\\
0&0&0&I_{n-1}
\end{pmatrix}$.
As like the proof of Theorem \ref{thm:10}, we get that $n\leq3$.
We assume that $n=3$.
Let $F(X_0,\ldots ,X_5)$ be the defining equation of $X$.
Since the dimension of ${\rm Fix}(g)$ is $n-2$,
\begin{equation*}
\begin{split}
F(X_0,\ldots ,X_{n+2})=&\sum_{i=0}^2F_i(X_3,\ldots,X_{n+2})X_i\\
&\ \ +\sum_{2\leq i+j+k\leq d}F_{i,j,k}(X_3,\ldots,X_{n+2})X^i_0X^j_1X_2^k.
\end{split}
\end{equation*}
Since $X$ is smooth, $F_i(X_3,\ldots,X_{n+2})\not=0$ for $i=0,1,2$.
Then $a=b=c$.
This contradicts that ord$(g)=k(d-1)$ for $k\geq2$.
Then $n=2$, and hence
$g=
\begin{pmatrix}
a&0&0&0\\
0&b&0&0\\
0&0&c&0\\
0&0&0&1
\end{pmatrix}$.
Since $a$, $b$, $c$, and $1$ are different numbers from each other,
$\sharp|{\rm Fix}(g)|\leq 4$.
From the above, we get this theorem.
\end{proof}
The following example shows that Theorem \ref{thm:11} does not hold for an outer Galois point.
\begin{exa}\label{exa:6}
Let $d_1\geq 7$ be an odd integer, and $d:=2d_1+1$.
Let $X$ be a smooth hypersurface of degree $d$ in $\mathbb P^4$ defined by 
\[X_0^d+X_0^{\frac{d+1}{2}}X_1^{\frac{d-1}{2}}+X_0X_1^{d-1}+
X_2^{d-1}X_4+X_2X_3^{d-1}+X_3X_4^{d-1}=0.\]
The $X$ has an automorphism $g$ of order $\frac{(d-1)}{2}d$ such that 
the following matrix $A$ is a representation matrix of $g$:
\[
A:=
\begin{pmatrix}
e_{\frac{(d-1)}{2}d}^{1-d}&0&0&0&0\\
0&e_{\frac{(d-1)}{2}d}&0&0&0\\
0&0&1&0&0\\
0&0&0&1&0\\
0&0&0&0&1
\end{pmatrix}.
\]
Then the dimension of Fix$(g^{d^2-3d+3})$ is $1$.
In addition,
$X$ has an automorphism $h$ such that the following matrix $B$ is a representation matrix of $h$:
\[
B:=
\begin{pmatrix}
	e_{\frac{(d-1)}{2}d}^{1-d}&0&0&0&0\\
	0&e_{\frac{(d-1)}{2}d}&0&0&0\\
	0&0&e_{d^2-3d+3}&0&0\\
	0&0&0&e_{d^2-3d+3}^{d-1}&0\\
	0&0&0&0&1
\end{pmatrix}.
\]
If $3$ divides $d$, then ord$(h)=\frac{(d-1)}{6}d(d^2-3d+3)$, and if $3$ does not divide $d$, then ord$(h)=\frac{(d-1)}{2}d(d^2-3d+3)$. 
For $1\leq i<\frac{d-1}{2}$, the diagonal entries of $B^i$ are different from each other.
By Theorem \ref{thm:6}, $\delta(X)\leq 2$ and $\delta'(X)\leq5$.
Since $\frac{d-1}{2}\geq 7$, if $X$ has a Galois point, then there is a Galois point $p$ of $X$ such that $g^l(p)=p$ for some $1\leq l<\frac{(d-1)}{2}d$.
As like Example \ref{exa:1}, this is a contradiction.
Then $X$ does not have Galois points.
\end{exa}

\end{document}